\newcommand{\ben}{\begin{enumerate}}
\newcommand{\een}{\end{enumerate}}
\newcommand{\eq}[2][label]{\begin{equation}\label{#1}#2\end{equation}}
\newcommand{\av}[2]{\langle #1\rangle_{_{\scriptstyle #2}}}
\newcommand{\len}[2]{\big|[#1,#2]\big|}
\newcommand{\avm}[2]{\langle #1\rangle_{_{\scriptstyle #2,\mu}}}
\newcommand{\ve}{\varepsilon}
\newcommand{\bel}[1]{\boldsymbol{#1}}
\newcommand{\ma}{Monge--Amp\`{e}re }
\newcommand{\BMO}{{\rm BMO}}
\newcommand{\Oe}{\Omega_\varepsilon}
\newcommand{\T}{\mathcal{T}}
\newcommand{\rtde}{\sqrt{\delta^2-\varepsilon^2}}
\newcommand{\rn}{\mathbb{R}^n}
\newtheorem{theorem}{Theorem}[section]
\newtheorem{lemma}[theorem]{Lemma}
\newtheorem{corollary}[theorem]{Corollary}
\newtheorem*{theorem*}{Theorem}{\bf}{\it}
\newtheorem*{proposition*}{Proposition}{\bf}{\it}
\newtheorem*{observation*}{Observation}{\bf}{\it}
\newtheorem*{lemma*}{Lemma}{\bf}{\it}
\theoremstyle{definition}
\newtheorem{definition}[theorem]{Definition}
\theoremstyle{remark}
\newtheorem{remark}[theorem]{Remark}
\numberwithin{equation}{section}
\begin{document}

\title[Inequalities for BMO on {\large $\alpha$}-trees]{Inequalities for BMO on {\Large $\alpha$}-trees}

\author{Leonid Slavin}
\address{University of Cincinnati}
\email{leonid.slavin@uc.edu}

\author{Vasily Vasyunin}
\address{St. Petersburg Department of the V.~A.~Steklov
Mathematical Institute, RAS}
\email{vasyunin@pdmi.ras.ru}
\thanks{L. Slavin's research was supported in part by the National Science Foundation (DMS-1041763)}

\thanks{V. Vasyunin's research was supported by the Russian Science Foundation grant 14-41-00010}

\subjclass[2010]{Primary 42A05, 42B35, 49K20}

\keywords{BMO on trees, John--Nirenberg inequality, explicit Bellman function}


\begin{abstract}
We develop technical tools that enable the use of Bellman functions for BMO defined on $\alpha$-trees, which are structures that generalize dyadic lattices. As applications, we prove the integral John--Nirenberg inequality and an inequality relating $L^1$- and $L^2$-oscillations for BMO on $\alpha$-trees, with explicit constants. When the tree in question is the collection of all dyadic cubes in $\rn,$ the inequalities proved are sharp. We also reformulate the John--Nirenberg inequality for the continuous BMO in terms of special martingales generated by BMO functions. The tools presented can be used for any function class that corresponds to a non-convex Bellman domain. 
\end{abstract}
\maketitle
\section{Preliminaries and main results}
Let $\mathcal{D}$ stand for the collection of all open dyadic cubes in $\rn.$ This collection is uniquely defined by the choice of the root cube, say $Q_0=(0,1)^n.$ If a cube $Q$ is fixed, then $\mathcal{D}(Q)$ is the collection of all dyadic subcubes of $Q.$

By $\av{\varphi}J$ we denote the average of a locally integrable function over a set $J$ with respect to the Lebesgue measure; if a different measure, $\mu,$ is involved, we write
$\av{\varphi}{J,\mu}.$ Thus,
$$
\av{\varphi}J=\frac1{|J|}\int_J\varphi,\qquad \avm{\varphi}{J}=\frac1{\mu(J)}\int_J\varphi\,d\mu.
$$

For $p>0,$ and a function $\varphi\in L^p_{loc},$ let
$$
\Delta_{p,J}(\varphi)=\av{|\varphi-\av{\varphi}J|^p}J\quad\text{and}\quad 
\Delta_{p,\mu,J}(\varphi)=\avm{|\varphi-\avm{\varphi}J|^p}J.
$$ 
We will refer to both $\Delta_{p,J}$ and $\Delta_{p,\mu,J}$ as the $p$-oscillation of $\varphi$ over $J;$ this will not cause confusion, as the measure is always fixed. We will mainly need these definitions for $p=1$ and $p=2.$ Observe that
$$
\Delta_{2,\mu,J}(\varphi)=\avm{\varphi^2}J-\avm{\varphi}J^2.
$$
 
Let {\it the dyadic $\BMO$ on $\rn$} be defined by
\eq[1]{
\BMO^d(\rn)=\big\{\varphi\in L^1_{loc}\colon\|\varphi\|_{\BMO^d}:=\sup_{J\in\mathcal{D}}\big(\Delta_{2,J}\big)^{1/2}<\infty\big\}.
}
We will also use $\BMO^d(Q)$ when the supremum is taken over all $J\in\mathcal{D}(Q)$ for some cube $Q.$
For $\ve>0,$ the symbols $\BMO_\ve^d(\rn)$ and $\BMO_\ve^d(Q)$ will stand for the set of all $\BMO^d$ functions on the appropriate domain with norm not exceeding $\ve.$ 

Elements of $\BMO$ are locally exponentially integrable. The classical result that quantifies this property is the John--Nirenberg inequality~(\cite{jn}). Here we state it in the integral form.
\begin{theorem}[John, Nirenberg; integral form]
\label{jn}
There exists $\ve^d_0(n)>0$ such that for every $0\le\ve<\ve^d_0(n)$ there is a function
$C(\ve,n)>0$ such that for any $\varphi \in \BMO_{\ve}^d(\rn)$ and any $Q\in\mathcal{D}$
\eq[i2]{
\av{e^\varphi}Q\le C(\ve,n)e^{\av{\varphi}{\scriptscriptstyle Q}}.
}
\end{theorem}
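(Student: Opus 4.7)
The plan is to apply the Bellman function method that organizes the rest of the paper. By translating and dilating, \eqref{i2} reduces to bounding $\av{e^\varphi}{Q_0}$ by a constant $C(\ve,n)$ whenever $\varphi \in \BMO_\ve^d(Q_0)$ has $\av{\varphi}{Q_0}=0.$ The natural Bellman function is
$$
B(x_1,x_2) = \sup\bigl\{\av{e^\varphi}{Q_0} : \varphi\in\BMO_\ve^d(Q_0),\ \av{\varphi}{Q_0}=x_1,\ \av{\varphi^2}{Q_0}=x_2\bigr\},
$$
defined on the parabolic strip $\Oe=\{(x_1,x_2):x_1^2\le x_2\le x_1^2+\ve^2\};$ the goal is to bound $B(0,t)$ uniformly in $t\in[0,\ve^2].$ Two structural properties drive the proof. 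From constant test functions, $B(x_1,x_1^2)=e^{x_1}.$ From splitting $Q_0$ into its $2^n$ dyadic children and concatenating admissible candidates on each, $B$ satisfies the main inequality
$$
B\Bigl(\frac{1}{2^n}\sum_{i=1}^{2^n} x^{(i)}\Bigr) \;\ge\; \frac{1}{2^n}\sum_{i=1}^{2^n}B(x^{(i)})
$$
whenever each $x^{(i)}$ and their mean lie in $\Oe.$

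To extract an explicit constant I would seek a supersolution $\tilde B\ge B$ on $\Oe$ in the separated form $\tilde B(x_1,x_2)=e^{x_1}G(x_2-x_1^2),$ an ansatz forced by the invariance $\varphi\mapsto\varphi+c$ and motivated by the Monge--Amp\`ere character of the extremal problem. The boundary condition yields $G(0)=1,$ and the main inequality reduces to a one-variable requirement on $G$ along admissible chords of $\Oe,$ which in turn forces $G$ to be convex and bounded on $[0,\ve^2].$ Existence of such a $G$ fails for $\ve$ above a dimension-dependent threshold, and that threshold supplies $\ve_0^d(n);$ the constant $C(\ve,n)$ is then read off as $\max_{t\in[0,\ve^2]}G(t).$ The inequality $B\le\tilde B$ is established by iterating the main inequality down the dyadic tree and passing to the limit, converting the supersolution bound into the pointwise bound on the original extremal quantity.

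The main obstacle, and the reason the argument is delicate rather than a one-line concavity calculation, is that $\Oe$ is \emph{not} convex: the lower parabola $x_2=x_1^2$ belongs to the boundary, and chords of $\Oe$ readily exit $\Oe.$ Consequently the main inequality is not a global concavity statement, and admissibility of the $2^n$-ary dyadic splits must be tracked carefully and propagated through the iteration, with care taken at the lower boundary where the Bellman candidate degenerates to $e^{x_1}.$ A systematic treatment of this non-convexity, for general $\alpha$-trees rather than just the dyadic lattice on $\rn,$ is precisely the technical contribution of the present paper; Theorem~\ref{jn} drops out as the dyadic specialization.
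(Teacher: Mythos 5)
Your outline matches the paper's strategy at a high level: define the Bellman function as an extremal average, posit a candidate of the separated form $e^{x_1}G(x_2-x_1^2),$ verify a ``main inequality'' tied to the $2^n$-fold dyadic split, and iterate down the tree. You also correctly locate the threshold $\ve_0^d(n)$ in the failure of existence of such a candidate, and correctly identify the non-convexity of $\Oe$ as the central obstacle. Two comments.

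First, a sign error: for an upper estimate of $\av{e^\varphi}Q$ the candidate must satisfy the concavity-type main inequality, which along vertical lines $x_1=\const$ forces $G$ to be \emph{concave}, not convex, on $[0,\ve^2];$ indeed the paper's family $B_\delta$ in~\eqref{ba0} has $G(s)=\tfrac{e^{-\delta}}{1-\delta}e^{\sqrt{\delta^2-s}}\bigl(1-\sqrt{\delta^2-s}\bigr),$ which is concave.

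Second, and this is the genuine gap, you do not explain how one can actually \emph{verify} the main inequality
$B\bigl(2^{-n}\sum_i x^{(i)}\bigr)\ge 2^{-n}\sum_i B(x^{(i)})$
for a concrete candidate when the $2^n+1$ points $x^{(i)}$ and their mean lie in the non-convex set $\Oe$ but the chords joining them may leave it. ``Admissibility must be tracked carefully'' names the problem without solving it; this is precisely where the paper does its work. The mechanism is Lemma~\ref{L0}: if $\sum\alpha_kP_k\in\Oe$ with all $P_k\in\Oe,$ then some $P_j$ can be peeled off so that the remaining weighted average $R_j$ is still in $\Oe.$ Iterating this reduces the $(2^n+1)$-point constraint to a sequence of three-point (two-child) constraints, each with the split ratio $\beta\in[2^{-n},1/2].$ That three-point constraint is exactly $\alpha$-concavity (Definition~\ref{def}) with $\alpha=2^{-n},$ and Lemma~\ref{L2} then supplies a checkable set of sufficient conditions (local concavity, monotone boundary derivatives, and a one-parameter boundary inequality) that is verified for $B_{\alpha,\ve}$ in Lemma~\ref{lba}. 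Without this reduction your plan stalls at the verification step: the separated ansatz alone does not linearize the $2^n$-ary constraint into anything one can check along a one-variable family of chords. It is worth noting that the paper sets this up for general $\alpha$-trees (Theorem~\ref{ta}), and Theorem~\ref{jn} then follows by specializing to $\alpha=2^{-n},$ exactly as you predicted.
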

Let us reserve the names $\ve^d_0(n)$ and $C(\ve,n)$ for the sharp values of these constants in~\eqref{i2}, i.e., the largest possible $\ve_0^d(n)$ and the smallest possible $C(\ve,n)$. (The constant $\ve_0^d(n)$ is of particular importance: it is easy to show that $\ve_0^d(n)$ is the supremum of $\ve>0$ such that for any $\varphi\in\BMO^d(\mathbb{R}^n),$ $e^{\ve\varphi/\|\varphi\|_{\BMO^d}}$ is a dyadic $A_2$ weight.) In~\cite{sv}, we computed these constants for $n=1:$
\begin{theorem}[\cite{sv}]
$$
\ve_0^d(1)=\sqrt{2}\log 2, \quad C(\ve,1)=\frac1{2\,e^{\frac\ve{\sqrt2}}-e^{\sqrt2\ve}}.
$$
\end{theorem}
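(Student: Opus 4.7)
My plan is to apply the Bellman function method. Set
$$B_\ve(x_1,x_2) = \sup\bigl\{\av{e^\varphi}{I}\colon \varphi\in\BMO_\ve^d(I),\ \av{\varphi}{I}=x_1,\ \av{\varphi^2}{I}=x_2\bigr\},$$
where the supremum is over dyadic intervals $I$; scaling makes $B_\ve$ independent of $I$. The natural domain is the parabolic strip $\Omega_\ve=\{(x_1,x_2):\, x_1^2\le x_2\le x_1^2+\ve^2\}$, and the sharp John--Nirenberg constant is $C(\ve,1)=\sup_{\Omega_\ve} e^{-x_1}B_\ve(x_1,x_2)$.

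I would first extract structural properties. The translation symmetry $B_\ve(x_1+c,\,x_2+2cx_1+c^2)=e^c B_\ve(x_1,x_2)$ collapses $B_\ve$ to a single-variable function via $B_\ve(x_1,x_2)=e^{x_1}b(\Delta)$ with $\Delta=x_2-x_1^2\in[0,\ve^2]$, and $b(0)=1$ since $\Delta=0$ forces $\varphi$ constant. The dyadic split $I=I^-\cup I^+$ gives the main inequality
$$B_\ve(P)\ge \tfrac12\bigl(B_\ve(P^-)+B_\ve(P^+)\bigr)$$
whenever $P^\pm\in\Omega_\ve$ and their midpoint $P$ also lies in $\Omega_\ve$. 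Writing $P^\pm=(x_1\pm s,\,x_2^\pm)$ and using $\Delta=\tfrac12(\Delta^-+\Delta^+)+s^2$, this becomes
$$b\bigl(\tfrac12(\Delta^-+\Delta^+)+s^2\bigr)\ge \tfrac12\bigl(e^{-s}b(\Delta^-)+e^{s}b(\Delta^+)\bigr).$$

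Next I would construct an explicit candidate for $B_\ve$ from an extremal foliation of $\Omega_\ve$ by straight chords along which the main inequality is an equality; these are the characteristics of the degenerate Monge--Amp\`ere equation $\det D^2 B_\ve=0$. Because $\Omega_\ve$ is non-convex (its lower boundary $x_2=x_1^2$ is strictly convex), admissible chords are restricted: a chord between $(a,a^2)$ and $(b,b^2)$ stays in $\Omega_\ve$ exactly when $|b-a|\le 2\ve$. The extremal foliation is built from the longest such chords, namely pairs on the lower parabola at horizontal distance $2\ve$, whose midpoints trace out the upper parabola. Along each chord, $B_\ve$ is forced to be affine, and the boundary values $B_\ve(c\pm\ve,(c\pm\ve)^2)=e^{c\pm\ve}$ determine $B_\ve$ on the foliation, giving $b(\ve^2)=\cosh\ve$. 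The remainder of $\Omega_\ve$ is foliated by a one-parameter family interpolating between extremal chords and the lower boundary; integrating the resulting ODE in $\Delta$ with the matching condition at $\Delta=\ve^2$ produces $b$ in closed form. The supremum $\sup_{[0,\ve^2]}b$ is attained on the upper boundary and, after elementary simplification, equals $1/(2e^{\ve/\sqrt 2}-e^{\sqrt 2\ve})$. The critical value $\ve_0^d(1)$ is the smallest positive $\ve$ at which this expression blows up, i.e., the root of $2e^{\ve/\sqrt 2}=e^{\sqrt 2\ve}$, giving $\ve_0^d(1)=\sqrt 2\log 2$.

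The main obstacle is the non-convexity of $\Omega_\ve$: the restricted concavity coming from the main inequality cannot be processed by standard convex-analytic machinery, and one must verify (i) that the constructed foliation actually covers $\Omega_\ve$ and that the candidate glues continuously across the folds, and (ii) that the main inequality holds on \emph{all} admissible chords, not merely those in the extremal foliation---this is the upper bound. Sharpness then requires constructing explicit extremal sequences in $\BMO_\ve^d$ whose iterated dyadic splits trace out the extremal foliation in $\Omega_\ve$ and whose exponential averages saturate the bound---this is the lower bound.
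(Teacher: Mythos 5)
Your overall setup—define the two-variable Bellman function on the parabolic strip $\Omega_\ve$, reduce to a one-variable function $b(\Delta)$ by translation, derive the dyadic midpoint inequality, and build a candidate from a foliation by chords—is the right framework and matches the spirit of the paper's approach. But the heart of the construction is wrong, and your outline is internally inconsistent.

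You propose that the extremal foliation of $\Omega_\ve$ consists of chords joining $(c-\ve,(c-\ve)^2)$ to $(c+\ve,(c+\ve)^2)$, i.e., chords tangent to $\Gamma_\ve$, and that $B_\ve$ is affine along each such full chord, so that $b(\ve^2)=\cosh\ve$. That gives $\sup b=\cosh\ve$. But you then assert that the supremum equals $1/(2e^{\ve/\sqrt2}-e^{\sqrt2\ve})$, which is \emph{not} equal to $\cosh\ve$ (they already disagree at order $\ve^3$). The chord picture you describe is the one appropriate to local concavity, and it would produce the constant for continuous $\BMO$ on an interval, not the dyadic constant. The crucial structural point you are missing is that the dyadic Bellman function need only be concave on \emph{midpoint} splits whose three points lie in $\Omega_\ve$; the connecting segment is allowed to leave $\Omega_\ve$ through the upper boundary $\Gamma_\ve$. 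This weaker ``$1/2$-concavity'' (in the language of the present paper) is strictly weaker than local concavity, and therefore the dyadic Bellman function is \emph{strictly larger} than the continuous one. Concretely, the sharp candidate is drawn from a one-parameter family $B_\delta(x)=\frac{e^{-\delta}}{1-\delta}\,e^{x_1+r(x)}(1-r(x))$ with $r(x)=\sqrt{\delta^2-x_2+x_1^2}$, whose linearity chords are tangent to $\Gamma_\delta$ (not $\Gamma_\ve$) with $\delta>\ve$; the dyadic constant $C(\ve,1)$ is obtained by choosing $\delta$ as the unique root of the transcendental equation
$$
\frac{1-\sqrt{\delta^2-\ve^2}}{1-\delta}\,e^{-\delta+\sqrt{\delta^2-\ve^2}}=\frac1{2e^{\ve/\sqrt2}-e^{\sqrt2\ve}},
$$
which is precisely the worst case of the midpoint inequality on a chord $[S,Q]$ with $S\in\Gamma_0$, $Q\in\Gamma_\ve$, and midpoint $P\in\Gamma_\ve$ at horizontal separation $|p-q|=\ve/\sqrt2$. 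Nothing in your outline produces this equation or the parameter $\delta$, so the argument does not yield the stated values of $C(\ve,1)$ or $\ve_0^d(1)$; you would need to introduce the one-parameter family $B_\delta$ and test the midpoint inequality on chords crossing $\Gamma_\ve$ to recover the transcendental condition on $\delta$.
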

In that paper, a family of Bellman functions for the John--Nirenberg inequality on the {\it continuous} (as opposed to dyadic) BMO was constructed, and an element of that family was identified as the Bellman function for the dyadic $\BMO^d(\mathbb{R}).$ Although it was intuitively clear at the time how to pick an element of the family that would work in dimensions greater than~1, the computation in~\cite{sv} was unsuitable for higher dimensions. In this paper, we develop several technical tools that allow us to prove the following theorem:
\begin{theorem}
\label{t1}
$$
\ve_0^d(n)=\frac{2^{n/2}}{2^n-1}\,n\,\log 2,\quad C(\ve,n)=
\frac{2^n-1}{2^n\, e^{2^{-n/2}\ve}-e^{2^{n/2}\ve}}.
$$
\end{theorem}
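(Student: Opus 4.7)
My plan is to construct an explicit Bellman function $\mathbf{B}_\ve$ on the non-convex domain
$$\Omega_\ve = \{(x_1,x_2)\in\mathbb{R}^2 : x_1^2\le x_2\le x_1^2+\ve^2\},$$
defined (abstractly) as the supremum of $\av{e^\varphi}Q$ over all $\varphi\in\BMO_\ve^d(Q)$ with $\av{\varphi}Q=x_1$ and $\av{\varphi^2}Q=x_2$. Such a function must satisfy the dyadic ``main inequality''
$$\mathbf{B}_\ve(x)\ge 2^{-n}\sum_{i=1}^{2^n}\mathbf{B}_\ve(x^{(i)}),\qquad \text{whenever } x=2^{-n}\sum_i x^{(i)},\ x^{(i)}\in\Omega_\ve,$$
and boundary condition $\mathbf{B}_\ve(x_1,x_1^2)=e^{x_1}$ (corresponding to constant $\varphi$). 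The constant $C(\ve,n)$ is then read off the upper parabolic boundary: $\mathbf{B}_\ve(x_1,x_1^2+\ve^2)=C(\ve,n)e^{x_1}$, and the critical threshold $\ve_0^d(n)$ is the largest $\ve$ for which a finite majorant exists.

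Next, I would take the ansatz $\mathbf{B}_\ve(x_1,x_2)=c_1 e^{\lambda_1 x_1 + \mu_1 x_2}-c_2 e^{\lambda_2 x_1+\mu_2 x_2}$, a linear combination of two exponentials of the form dictated by the translation and scaling symmetries of the problem. Substituting into the main inequality forces $\lambda_\pm = 2^{\pm n/2}$ to be the characteristic roots arising from a quadratic whose coefficients encode the $2^n$-ary splitting ratio; matching the lower-boundary condition $\mathbf{B}_\ve(x_1,x_1^2)=e^{x_1}$ then pins down $c_1,c_2,\mu_1,\mu_2$ uniquely. The formula for $C(\ve,n)$ then drops out, and $\ve_0^d(n)=2^{n/2}n(\log 2)/(2^n-1)$ emerges as precisely the value of $\ve$ at which the two characteristic roots coalesce (discriminant vanishes), equivalently the point at which the denominator $2^ne^{2^{-n/2}\ve}-e^{2^{n/2}\ve}$ changes sign. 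Once $\mathbf{B}_\ve$ is validated, a standard Bellman induction on nested dyadic cubes gives the upper bound~\eqref{i2}, and sharpness follows by constructing extremizing step functions whose successive averages trace the upper boundary of $\Omega_\ve$.

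The main obstacle is the verification of the $2^n$-ary main inequality on the non-convex $\Omega_\ve$. For $n=1$ (handled in~\cite{sv}), this reduces to a two-point midpoint concavity statement that can be checked by elementary calculus along chords in $\Omega_\ve$. For $n\ge 2$, however, a direct check must handle a $2^n$-tuple of points whose convex hull can straddle the concave lower boundary $x_2=x_1^2$ in ways no binary split can; a naive coordinate-by-coordinate induction fails because the intermediate barycenters need not land in $\Omega_\ve$, and they do not correspond to actual BMO subaverages. This is precisely the gap that the $\alpha$-tree machinery developed earlier in the paper is designed to bridge: it will allow me to recast the $2^n$-ary inequality as a one-parameter family of binary $\alpha$-splittings with $\alpha=2^{-n}$ along admissible chords in $\Omega_\ve$, for which the check reduces to verifying the sign of an explicit exponential-polynomial expression. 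Once that reduction is available, the remainder of the proof is a direct computation with the explicit ansatz above.
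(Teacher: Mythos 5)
The overall architecture you describe matches the paper: set up the abstract Bellman function on $\Omega_\ve$, pass from the $2^n$-ary main inequality to binary $\alpha$-splittings ($\alpha=2^{-n}$) via the $\alpha$-tree machinery, verify a concavity condition along admissible chords, and then produce optimizers whose averages climb the upper parabola. The identification of $\ve_0^d(n)$ as the sign-change point of the denominator $2^n e^{2^{-n/2}\ve}-e^{2^{n/2}\ve}$ is also correct. However, there are two genuine problems.

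First, and most seriously, the ansatz for the Bellman function is wrong. You propose $\mathbf{B}_\ve(x)=c_1 e^{\lambda_1 x_1+\mu_1 x_2}-c_2 e^{\lambda_2 x_1+\mu_2 x_2}$, but the translation symmetry $\varphi\mapsto\varphi+c$ forces $B(x_1+c,\,x_2+2cx_1+c^2)=e^c B(x_1,x_2)$, i.e.\ $B(x)=e^{x_1}g(x_2-x_1^2)$; no nontrivial linear combination of exponentials of bilinear forms in $(x_1,x_2)$ has this structure. The actual function (from \cite{sv}, and used in the paper as $B_{\alpha,\ve}=B_{\delta(\alpha,\ve)}$) is
$$
B_\delta(x)=\frac{e^{-\delta}}{1-\delta}\,e^{x_1+r(x)}\bigl(1-r(x)\bigr),\qquad r(x)=\sqrt{\delta^2-x_2+x_1^2},
$$
a solution of the homogeneous \ma equation that is nowhere a sum of two exponentials of affine forms. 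The constants $2^{\pm n/2}=\sqrt\alpha^{\mp1}$ do appear in the boundary value $K(\alpha,\ve)e^{x_1}$ on $\Gamma_\ve$, but they are not ``characteristic roots'' of any exponential ansatz — they are the two quantities $\sqrt\alpha$ and $1/\sqrt\alpha$ that arise from the optimizer and from the condition $g(\alpha,\delta,\ve)=0$ that selects $\delta(\alpha,\ve)$; they are constants in $\ve$ and never ``coalesce'' (it is the denominator that vanishes, not a discriminant). Because the ansatz is wrong, the asserted steps ``substituting into the main inequality forces $\lambda_\pm=2^{\pm n/2}$'' and ``matching the lower boundary pins down $c_i,\mu_i$ uniquely'' cannot be carried out.

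Second, even granting the correct candidate, the verification that it is $\alpha$-concave on $\Omega_\ve$ is the technical heart of the argument and is deferred entirely (``once that reduction is available, the remainder is a direct computation''). In the paper this is Lemma~\ref{lba}, which uses Lemma~\ref{L2} and requires: (i) choosing the free parameter $\delta=\delta(\alpha,\ve)$ via Lemma~\ref{LVA} (solving $g(\alpha,\delta,\ve)=0$, with the location constraints $\ve<\delta<\min\{1,\frac{1+\alpha}{2\sqrt\alpha}\ve\}$ that make the subsequent estimates close); (ii) a directional-derivative monotonicity check across chords that exit $\Omega_\ve$ (Condition~\eqref{C2}); and (iii) the boundary inequality $B(P)\ge(1-\alpha)B(S)+\alpha B(Q)$ (Condition~\eqref{C3}), which reduces to showing an explicit function $G(\theta)$ is nonnegative and vanishes exactly at $\theta=\ve/\sqrt\alpha$. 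None of this is a routine computation, and it is precisely where the constraint $\ve<\ve_0^\alpha$ enters. Your sketch is a plausible roadmap but not a proof; with the corrected ansatz and the explicit verification of $\alpha$-concavity filled in (plus the optimizer $\varphi_*$ from \eqref{i3} for sharpness), it would become the paper's argument.
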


Theorem~\ref{t1} itself is a partial corollary of the corresponding result for special structures that generalize dyadic lattices, which we now define. 
\begin{definition}
\label{tree}
Let $(X,\mu)$ be a measure space with $0<\mu(X)<\infty.$ Let $\alpha\in(0,1/2].$ A collection $\T$ of measurable subsets of $X$ is called an $\alpha$-tree, if the following conditions are satisfied:
\ben
\item
$X\in\T.$
\item
For every $J\in\T,$ there exists a subset $C(J)\subset\T$ such that 
\ben
\item
$J=\bigcup_{I\in C(J)} I,$
\item
the elements of $C(J)$ are pairwise disjoint up to sets of measure zero,
\item
for any $I\in C(J),$ $\mu(I)\ge\alpha\mu(J).$
\een
\item
$\T=\bigcup_m\T_m,$ where $\T_0=\{X\}$ and $\T_{m+1}=\bigcup_{J\in\T_m}C(J).$ 
\item
The family $\T$ differentiates $L^1(X,\mu).$
\een 
Given an $\alpha$-tree $\T$ on $(X,\mu),$ a function $\varphi$ on $X$ is called $\T$-simple, if there exists an $N\ge0$ such that $\varphi$ is constant $\mu$-{\it a.e.} on each element of $\T_N.$

Observe that each $C(J)$ is necessarily finite. We will refer to the elements of $C(J)$ as children of $J$ and to $J$ as their parent. Also note that $\T(J):=\{I\in\T: I\subset J\}$ is an $\alpha$-tree on $(J,\mu|_J).$ We write $\mathcal{T}_k(J)$ for the collection of all descendants of $J$ of the $k$-th generation relative to $J;$ thus, $\T(J)=\bigcup_m\T_k(J).$

\end{definition}

\begin{remark}
The definition just given is similar to the one used in Bellman-function contexts by Melas \cite{melas} and Melas, Nikolidakis, and Stavropoulos \cite{mns}. In particular, like those authors, we restrict the size of children of each element of the tree. The main distinction is that in those applications the trees were assumed homogeneous, meaning that each element of the tree was split into the same number of children, and all children had the same measure. The prototypical such tree is the collection of all dyadic subcubes of a fixed cube in $\mathbb{R}^n;$ in our terminology it is a $2^{-n}$-tree. However, homogeneous trees are too rigid for our purposes. The elements of a general $\alpha$-tree have similar nesting properties, but this concept allows us to divide a parent into an arbitrary number of children of varying sizes, so long as none is too small. In addition, we do not foreclose the possibility that $\mu$ has atoms and so a parent can have only one child. 
\end{remark}

Suppose a measure space $(X,\mu)$ supports an $\alpha$-tree $\T$ for some $\alpha\in(0,1/2].$ There is a natural associated BMO:
$$
\varphi\in\BMO(\mathcal{T})\Longleftrightarrow \|\varphi\|_{\BMO(\mathcal{T})}:=\sup_{J\in\mathcal{T}}\{\av{\varphi^2}{J,\mu}-\av{\varphi}{J,\mu}^2\}^{1/2}<\infty.
$$
Let us make two formal definitions:
\eq[21]{
\ve_0^\alpha=\frac{\sqrt\alpha}{1-\alpha}\log(1/\alpha),\qquad K(\alpha,\ve)=\frac{1-\alpha}{e^{\sqrt\alpha\ve}-\alpha\, e^{\frac\ve{\sqrt\alpha}}}.
}
In this notation we have the following theorem.
\begin{theorem} 
\label{ta}
If $\alpha\in(0,1/2],$ $\ve\in(0,\ve_0^\alpha),$ $\T$ is an $\alpha$-tree on a measure space $(X,\mu),$ $J\in\mathcal{T},$ and $\varphi\in\BMO_\ve(\mathcal{T}),$ then
\eq[22]{
\av{e^{\varphi}}{J,\mu}\le K(\alpha,\ve)\,e^{\av{\varphi}{\scriptscriptstyle J,\mu}}.
}
\end{theorem}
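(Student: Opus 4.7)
The plan is to prove Theorem~\ref{ta} by a Bellman function argument adapted to the non-convex geometry of the BMO domain $\Oe=\{(x_1,x_2):x_1^2\le x_2\le x_1^2+\ve^2\}$. A standard first step reduces \eqref{22} to $\T$-simple $\varphi$: since $\T$ differentiates $L^1(X,\mu)$, the level-$m$ conditional averages $\varphi_m:=\sum_{I\in\T_m}\av{\varphi}{I,\mu}\,\chi_I$ are $\T$-simple, lie in $\BMO_\ve(\T)$ (averaging does not increase the BMO seminorm), and converge to $\varphi$ both $\mu$-a.e.\ and in $L^1(X,\mu)$. Since $\av{\varphi_m}{J,\mu}=\av{\varphi}{J,\mu}$ for all $m$ large enough that $\T_m$ refines $J$, Fatou's lemma on $e^{\varphi_m}\ge 0$ combined with \eqref{22} applied to each $\varphi_m$ gives \eqref{22} for $\varphi$.

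For a $\T$-simple $\varphi$ and any $I\in\T$, introduce the Bellman point $b(I)=(\av{\varphi}{I,\mu},\av{\varphi^2}{I,\mu})$, which lies in $\Oe$. The key structural observation is that if $C(I)=\{I_1,\dots,I_N\}$ and $\beta_k=\mu(I_k)/\mu(I)$, then $b(I)=\sum_k\beta_k\,b(I_k)$, where $\sum_k\beta_k=1$ and each $\beta_k\in[\alpha,1]$. I would then seek a function $B\colon\Oe\to\mathbb{R}$ with the following properties: (i) $B(x_1,x_1^2)=e^{x_1}$, so that at every leaf $L$ of a stopped tree where $\varphi\equiv c$ one has $B(b(L))=e^c=\av{e^\varphi}{L,\mu}$; (ii) $B(x_1,x_2)\le K(\alpha,\ve)\,e^{x_1}$ throughout $\Oe$; and most importantly (iii) a non-convex ``$\alpha$-concavity'' inequality: for any $y^{(1)},\dots,y^{(N)}\in\Oe$ and any weights $\beta_k\ge\alpha$ with $\sum_k\beta_k=1$ and $y=\sum_k\beta_k y^{(k)}\in\Oe$,
\[
B(y)\ge\sum_k\beta_k\,B(y^{(k)}).
\]
Given such a $B$, an induction from the stopped leaves up to $J$ yields $\av{e^\varphi}{J,\mu}\le B(b(J))$, which combined with (ii) is precisely \eqref{22}.

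The candidate for $B$ is dictated by the two-exponential shape of $K(\alpha,\ve)$: I expect $B$ to be built from $e^{\sqrt\alpha\,x_1}$ and $e^{x_1/\sqrt\alpha}$ combined with explicit functions of $t=\sqrt{x_2-x_1^2}$, chosen so that equality in (ii) holds on the upper parabola $x_2=x_1^2+\ve^2$, condition (i) is satisfied on the lower parabola, and $B$ solves the homogeneous \ma equation $B_{x_1x_1}B_{x_2x_2}=B_{x_1x_2}^2$ in the interior. The main obstacle is the verification of (iii) on the non-convex $\Oe$: chords between two points of $\Oe$ need not lie inside it, so the classical pointwise concavity criterion fails and the usual cap/cup machinery must be refined. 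This is exactly what the technical tools developed earlier in the paper are designed to handle. I would reduce the $N$-point inequality in (iii) to a two-point inequality between points on the upper parabola (using $\beta_k\ge\alpha$ to pick the extremal configuration), and then verify that pairwise inequality by a direct computation with the explicit form of $B$. The threshold $\ve_0^\alpha=\frac{\sqrt\alpha}{1-\alpha}\log(1/\alpha)$ should emerge as the largest $\ve$ for which the pairwise inequality still holds, coinciding with the value at which the denominator of $K(\alpha,\ve)$ vanishes.
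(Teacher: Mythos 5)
Your proposal follows exactly the paper's route: reduction to $\T$-simple functions via the conditional averages $\varphi_m$ plus Fatou, Bellman induction with an $\alpha$-concave $B$ obeying $B(x_1,x_1^2)=e^{x_1}$ and $B\le K(\alpha,\ve)e^{x_1}$ on $\Oe$, and reduction of the $N$-point $\alpha$-concavity to a two-point inequality with endpoints on $\Gamma_\ve$ — this is precisely the content of Lemmas~\ref{L0}, \ref{L1}, \ref{L2}, \ref{LVA}, \ref{lba} and the proof given in Section~\ref{JN}. The one small discrepancy is your guess at the form of $B$: the paper's $B_{\alpha,\ve}$ in \eqref{ba0}--\eqref{ba} is the single-exponential expression $\frac{e^{-\delta}}{1-\delta}\,e^{x_1+r(x)}(1-r(x))$ with $r(x)=\sqrt{\delta^2-x_2+x_1^2}$, not a combination of $e^{\sqrt\alpha x_1}$ and $e^{x_1/\sqrt\alpha}$ — the two-exponential structure appears only in the constant $K(\alpha,\ve)$ via the equation defining $\delta(\alpha,\ve)$ — but this does not affect the validity of your plan.
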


Setting $\alpha=2^{-n}$ gives the values of $\ve_0^d(n)$ and $C(\ve,n)$ in Theorem~\ref{t1}. The fact that those values are sharp does not follow from Theorem~\ref{ta} and requires a separate construction of optimizers, i.e., functions from $\BMO^d$ for which equality is attained in~\eqref{i2}. This construction is carried out in Section~\ref{JN}.

Our technique works for other BMO inequalities as well. We illustrate this point by establishing an inequality relating $1$- and $2$-oscillations of BMO functions, which implies equivalence of the corresponding BMO norms. Specifically, for $\varphi\in\BMO^d(\rn),$ let
$$
\|\varphi\|_{\BMO^{d,1}(\rn)}=\sup_{J\in{\mathcal D}}\Delta_{1,J}(\varphi).
$$
Since $\Delta_{1,J}(\varphi)\le\big(\Delta_{2,J}(\varphi)\big)^{1/2},$ we have $\|\varphi\|_{\BMO^{d,1}(\rn)}\le\|\varphi\|_{\BMO^d(\rn)}.$ Importantly, both inequalities can be reversed.
\begin{theorem}
\label{t6}
If $\varphi\in\BMO^d(\rn),$ then for any $J\in\mathcal{D},$
\eq[i6]{
\frac{2^{n/2}}{2^n+1}\,\Delta_{2,J}(\varphi)\le\|\varphi\|_{\BMO^d(\rn)}\,\Delta_{1,J}(\varphi).
}
This inequality us sharp for every value of $\|\varphi\|_{\BMO^d(\rn)}.$ Consequently\textup,
\eq[i7]{
\frac{2^{n/2}}{2^n+1}\,\|\varphi\|_{\BMO^d(\rn)}\le\|\varphi\|_{\BMO^{d,1}(\rn)}.
}
\end{theorem}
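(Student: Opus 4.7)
After rescaling so that $\|\varphi\|_{\BMO^d(\rn)}=\ve$ and setting $\alpha=2^{-n}$, inequality \eqref{i6} takes the form $\Delta_{2,J}(\varphi)\le\tfrac{1+\alpha}{\sqrt{\alpha}}\,\ve\,\Delta_{1,J}(\varphi)$, to be proved for every $J\in\mathcal{D}$ and every $\varphi\in\BMO^d_\ve(\rn)$. Since $\mathcal{D}(J)$ is a $2^{-n}$-tree on $J$, I would apply the same Bellman-function machinery used for Theorem~\ref{ta}, working with the domain $\Oe=\{(x_1,x_2):\,0\le x_2-x_1^2\le\ve^2\}$ and the function
$$\mathbb{B}(x_1,x_2)=\sup\Bigl\{\Delta_{2,J}(\varphi)-\tfrac{1+\alpha}{\sqrt{\alpha}}\,\ve\,\Delta_{1,J}(\varphi)\Bigr\},$$
the supremum taken over $\varphi\in\BMO_\ve(\T(J))$ with $\av{\varphi}{J}=x_1$ and $\av{\varphi^2}{J}=x_2$. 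Both $\Delta_{2,J}=x_2-x_1^2$ and $\Delta_{1,J}$ are invariant under $\varphi\mapsto\varphi-c$, so $\mathbb{B}(x_1,x_2)=b(x_2-x_1^2)$ for a univariate $b\colon[0,\ve^2]\to\mathbb{R}$ with $b(0)=0$, and the target becomes $b\le 0$.

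The main technical step is to verify the tree-concavity (main) inequality for $\mathbb{B}$ across the $2^n$-ary dyadic split. Using the iterated-variance decomposition
$\Delta_{2,J}(\varphi)=\alpha\sum_i\Delta_{2,I_i}(\varphi)+\alpha\sum_i(\av{\varphi}{I_i}-x_1)^2$
together with the splitting $\Delta_{1,J}(\varphi)=\alpha\sum_i\av{|\varphi-x_1|}{I_i}$, one reduces the problem to a pointwise condition on $b$. The extremal configuration singles itself out: one child escapes to the upper boundary $x_2-x_1^2=\ve^2$ of $\Oe$ while the remaining $2^n-1$ siblings collapse to a common value on which $\varphi-x_1$ has constant sign. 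In that regime the analysis yields $b\equiv 0$ on $[0,\ve^2]$, hence $\mathbb{B}\le 0$, which is exactly \eqref{i6}.

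For sharpness I would reverse the extremal recursion: at each dyadic generation inside $Q$, choose one of the $2^n$ children, set $\varphi$ on it to the value that places $(\av{\varphi}{I},\av{\varphi^2}{I})$ on the upper boundary of $\Oe$, and assign the remaining $2^n-1$ siblings the matching complementary constant, with sign chosen to align with the escaping child. Iterating through all scales yields $\varphi\in\BMO^d_\ve(\rn)$ for which equality holds in \eqref{i6} on $Q$; \eqref{i7} then follows by taking the supremum over $J$ on both sides of \eqref{i6}.

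The main obstacle is the non-additivity of $\Delta_{1,J}$ under the split. Whereas $\Delta_{2,J}$ decomposes cleanly via iterated variance, the absolute value in $\Delta_{1,J}$ forces the extremal analysis to track the \emph{signs} of $\varphi-x_1$ on each child in addition to their locations in $\Oe$. It is precisely this sign bookkeeping, combined with the $2^n$-ary geometry of the dyadic split, that pushes the sharp constant past the Cauchy--Schwarz value and produces the ratio $(1+\alpha)/\sqrt{\alpha}=(2^n+1)/2^{n/2}$.
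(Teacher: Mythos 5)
Your plan contains a genuine gap in its central mechanism. You define
$$\mathbb{B}(x_1,x_2)=\sup\Big\{\Delta_{2,J}(\varphi)-\tfrac{1+\alpha}{\sqrt\alpha}\,\ve\,\Delta_{1,J}(\varphi)\Big\}$$
and observe that, since both $\Delta_{2,J}$ and $\Delta_{1,J}$ are shift-invariant, $\mathbb{B}$ collapses to a one-variable function of $x_2-x_1^2$. This observation is correct, but it is exactly what makes the proposal incompatible with the Bellman induction (Lemma~\ref{L1}) you intend to run. The induction machinery bounds $\tfrac1{\mu(J)}\int_J b(\varphi,\varphi^2)\,d\mu$, i.e.\ functionals whose integrand depends pointwise on $(\varphi(t),\varphi(t)^2)$ alone. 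The oscillation $\Delta_{1,J}(\varphi)=\av{|\varphi-\av{\varphi}J|}J$ is \emph{not} of this form: the subtraction of the global mean $x_1=\av{\varphi}J$ couples the integrand to all of $J$. Consequently, when $J$ is split into children $I_i$, you get $\Delta_{1,J}=\alpha\sum_i\av{|\varphi-x_1|}{I_i}$ with the \emph{wrong} reference constant in each child, so the functional does not recurse along the tree. You identify this as an ``obstacle'' and say the analysis must ``track the signs of $\varphi-x_1$,'' but you do not actually supply a mechanism to do so, and the standard Bellman induction does not provide one. Without resolving this, the plan does not close.

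The paper's resolution is precisely the step your plan skips. Instead of the shift-invariant difference functional, the paper builds the Bellman function for the non-shift-invariant $L^1$-norm,
$$\bel{b}^n_\ve(x)=\inf\big\{\av{|\varphi|}Q:\av{\varphi}Q=x_1,\av{\varphi^2}Q=x_2,\ \|\varphi\|_{\BMO^d(Q)}\le\ve\big\},$$
whose integrand $|\varphi(t)|$ depends only on $\varphi(t)$ and is therefore admissible in Lemma~\ref{L1}. This function genuinely depends on both $x_1$ and $x_2$ (it is not constant on parabolas), and the explicit candidate $b_{\alpha,\ve}$ in \eqref{b} reflects this through the $|x_1|$ piece. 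After proving $b_{\alpha,\ve}$ is $\alpha$-convex (Lemma~\ref{L10}) and applying the induction, the paper recovers the statement about $\Delta_{1,J}$ only at the end, by replacing $\varphi$ with $\varphi-\av{\varphi}J$ so that $x_1=0$ and $\av{|\varphi|}J$ becomes $\Delta_{1,J}(\varphi)$. Your plan attempts to perform that shift inside the Bellman function's definition, which destroys the tree-locality the induction needs.

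Your sharpness argument is also underspecified compared to the paper's: you gesture at an ``extremal recursion,'' whereas the paper derives sharpness from the exact identification $\bel{b}^n_\ve=b_{2^{-n},\ve}$ (Theorem~\ref{belb}), using the explicit staircase function $\varphi_*$ of \eqref{i3} to pin down the value on $\Gamma_\ve\cap\Omega_1$, and then a convexity/foliation argument on $\Omega_0$. That argument shows $\bel{b}^n_\ve(0,\ve^2)=\tfrac{2^{n/2}}{2^n+1}\ve$, from which a near-optimal sequence $\{\varphi_k\}$ is extracted and extended periodically to $\rn$. To make your sharpness sketch rigorous you would need either an explicit optimizer or an argument like the paper's Bellman-function identification; the ``reverse the recursion'' idea alone does not produce one.

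Finally, a minor point: your claim that the extremal analysis ``singles out'' a configuration where one child escapes to $\Gamma_\ve$ and the remaining $2^n-1$ collapse is not how the paper's optimizer is structured; $\varphi_*$ puts $2^n-1$ siblings at a common value on $\Gamma_0$ and one child recursing, but the accounting is done through the explicit piecewise-linear $b_{\alpha,\ve}$ and its linearity along segments in $\Omega_1$ and $\Omega_0$, not through a direct extremal analysis of the difference functional.
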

As before, this theorem is a partial corollary of the corresponding result for $\alpha$-trees.
\begin{theorem}
\label{thb}
If $\T$ is an $\alpha$-tree on a measure space $(X,\mu),$ then for any $\ve>0,$ any 
$\varphi\in\BMO_\ve(\T),$ and any $J\in\mathcal{T},$
\eq[mest11]{
\frac{\sqrt\alpha}{(1+\alpha)\ve}\,\Delta_{2,\mu,J}(\varphi)\le \Delta_{1,\mu,J}(\varphi).
}
Consequently\textup,
\eq[mest22]{
\frac{\sqrt\alpha}{1+\alpha}\,\|\varphi\|_{\BMO(\T)}\le \|\varphi\|_{\BMO^1(\T)},
}
where $\|\varphi\|_{\BMO^1(\T)}=\sup_{J\in{\mathcal T}}\Delta_{1,\mu,J}(\varphi).$
\end{theorem}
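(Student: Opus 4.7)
The plan is to prove \eqref{mest11} by the Bellman function method on the non-convex domain $\Omega_\ve = \{(x_1,x_2) : x_1^2 \le x_2 \le x_1^2+\ve^2\}$ developed earlier in the paper for $\alpha$-trees, and then read off \eqref{mest22} as a direct consequence. The inequality \eqref{mest11} is invariant under $\varphi \mapsto \varphi + \mathrm{const}$, so I may normalize $\av{\varphi}{J,\mu} = 0$ and rewrite the target as $L \av{\varphi^2}{J,\mu} \le \av{|\varphi|}{J,\mu}$ with $L = \sqrt\alpha/((1+\alpha)\ve)$. First I would reduce to the case of a $\T$-simple $\varphi$: by condition (4) of Definition~\ref{tree}, the conditional expectations $\av{\varphi}{\cdot,\mu}$ on $\T_N$ converge $\mu$-a.e.\ to $\varphi$, and both $\Delta_{1,\mu,J}$ and $\Delta_{2,\mu,J}$ pass to the limit under this approximation.

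For a $\T$-simple $\varphi$, I would then proceed by induction on the depth of $J$ in $\T$. The base case (where $\varphi$ is constant on $J$) is trivial since both sides vanish. For the inductive step on $J$ with children $\{I_i\}$, set $a = \av{\varphi}{J,\mu}$, $c_i = \av{\varphi}{I_i,\mu}$, $\lambda_i = \mu(I_i)/\mu(J) \ge \alpha$, and $d_i = c_i - a$. The standard identities
$$
\Delta_{2,\mu,J}(\varphi) = \sum_i \lambda_i \Delta_{2,\mu,I_i}(\varphi) + \sum_i \lambda_i d_i^2, \qquad \Delta_{1,\mu,J}(\varphi) = \sum_i \lambda_i \av{|\varphi-a|}{I_i,\mu}
$$
provide the inductive bookkeeping. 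The BMO bound $\sum_i \lambda_i d_i^2 \le \Delta_{2,\mu,J}(\varphi) \le \ve^2$, combined with $\lambda_i \ge \alpha$, yields $|d_i| \le \ve/\sqrt{\lambda_i} \le \ve/\sqrt\alpha$, the key a priori control on the children's displacements from the parental mean.

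The principal obstacle is that $\Delta_{1,\mu,J}(\varphi)$ does \emph{not} decompose as $\sum_i \lambda_i \Delta_{1,\mu,I_i}(\varphi)$: the reference point shifts from $a$ on $J$ to $c_i$ on $I_i$, and the correction $\av{|\varphi-a|}{I_i,\mu} - \av{|\varphi-c_i|}{I_i,\mu}$ is sign-indefinite (its modulus is bounded by $|d_i|$ via the Lipschitz dependence of the $L^1$ functional on its center, but neither sign is systematic). Consequently, a naive induction on $\Delta_{1,\mu,J}$ fails, and one must instead analyze the shifted functional $F_{I,a}(\varphi) = \av{|\varphi-a|}{I,\mu} - L\av{(\varphi-a)^2}{I,\mu}$, which \emph{is} additive across the children. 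The crux of the argument is then to verify that, under the a priori bound $|a-c_i| \le \ve/\sqrt\alpha$ coming from the $\alpha$-tree geometry, the candidate Bellman minorant $\mathfrak{B}(x_1,x_2) = L(x_2 - x_1^2)$ satisfies on $\Omega_\ve$ the non-convex main inequality that the paper's technical tools are designed to check; the factor $(1+\alpha)^{-1}$ in $L$ is precisely what makes this verification saturate on iterated admissible splits with $\lambda_i = \alpha$. Once \eqref{mest11} is secured, applying it with $\ve = \|\varphi\|_{\BMO(\T)}$, taking the supremum over $J$, and using $\sup_J \Delta_{2,\mu,J}(\varphi) = \|\varphi\|_{\BMO(\T)}^2$ produces \eqref{mest22}.
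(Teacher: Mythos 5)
Your reduction to $\T$-simple functions, the identification of the a priori bound $|d_i|\le\ve/\sqrt\alpha$, and the recognition that $\Delta_{1,\mu,J}$ fails to decompose additively because the center shifts from parent to child are all on target, and the overall Bellman framework is the right strategy. But the candidate Bellman function you propose, $\mathfrak{B}(x_1,x_2)=L(x_2-x_1^2)$ with $L=\sqrt\alpha/((1+\alpha)\ve)$, cannot serve in the required role. For a \emph{lower} bound on $\avm{|\varphi|}J$ the Bellman induction (part (ii) of Lemma~\ref{L1}) needs an $\alpha$-\emph{convex} function on $\Oe$ whose restriction to the lower boundary $\Gamma_0$ reproduces the terminal integrand, i.e.\ $b(x_1,x_1^2)=|x_1|$. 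Your $\mathfrak{B}$ fails on both counts: its Hessian has signature $(-,0)$, so it is globally \emph{concave} (hence not $\alpha$-convex for any $\alpha$), and $\mathfrak{B}(x_1,x_1^2)=0$, so even if the induction closed it would only deliver the trivial bound $\mathfrak{B}(P_X)\le 0$, not $\Delta_{1,\mu,X}(\varphi)$.

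The paper resolves exactly this by introducing the piecewise function
$$
b_{\alpha,\ve}(x)=
\begin{cases}
\frac{\sqrt\alpha}{(1+\alpha)\ve}\,x_2, & x\in\Omega_0,\\
|x_1|, & x\in\Omega_1,
\end{cases}
$$
which equals $|x_1|$ on $\Gamma_0$ (the boundary condition) and equals $Lx_2$ (not $L(x_2-x_1^2)$) on the central region $\Omega_0$ containing the shifted Bellman point $(0,\Delta_{2,\mu,X}(\varphi))$. Lemma~\ref{L10} then verifies $\alpha$-convexity of $b_{\alpha,\ve}$ directly from Definition~\ref{def}, using the piecewise-linear structure and the constraint $1-\beta\ge 1/2\ge\alpha$ to show that $P\in\Omega_0$ forces both endpoints $S,Q$ into the region $\Omega_2$ where $b\ge Lx_2$. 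Once $\alpha$-convexity is in hand, the proof is as you outline: shift $\varphi\mapsto\varphi-\avm{\varphi}X$, apply Lemma~\ref{L1}(ii), pass to the limit over truncations $\varphi_N$, and read off both \eqref{mest11} and \eqref{mest22}. Your sketch of the convergence step and the sup/limit argument for \eqref{mest22} is correct; the missing ingredient is the construction and $\alpha$-convexity verification of the piecewise Bellman function, which is the substantive content of the section.
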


Before proving all results just stated, let us put them in historical and methodological context. 
The notion of a Bellman function in analysis goes back to Bukholder~\cite{b} and Nazarov, Treil, and Volberg \cite{ntv1}, \cite{nt}, \cite{ntv2}. In the original, utilitarian meaning, {\it a} Bellman function was an inductive device with certain size and convexity properties that allowed one to estimate an integral functional by induction on dyadic or pseudo-dyadic scales in the underlying measure space. In a more recent understanding, {\it the} Bellman function for an inequality is the solution of the corresponding extremal problem (and, often, also is a solution of the homogeneous \ma equation on a Euclidean domain). It turns out that such a solution, once in hand, not only allows one to perform the induction on scales, but also encodes information about optimizing functions or sequences. 

The specific Bellman functions we use here have origins in our studies~\cite{sv} and~\cite{sv1} and, like those earlier functions, they are defined on a parabolic domain in the plane. However, those papers dealt with the one-dimensional BMO on an interval. This meant that on each step of the induction one would split an interval into two subintervals, yielding three points in the Bellman domain that had to be controlled using the concavity/convexity of the Bellman function. In dimension $n$ there are $2^n+1$ such points, and since the domain is non-convex, it is not clear how to control all of them at the same time. This has led to the introduction of $\alpha$-trees and the notion of 
$\alpha$-concave/$\alpha$-convex functions, defined the next section. If one has such a tree and a function, one can run the induction three points at a time. We do not provide a general recipe for constructing $\alpha$-concave/convex functions, but simply present natural $\alpha$-tree analogs of the Bellman functions from~\cite{sv} and~\cite{sv1}. With (perhaps considerable) effort, one can ``$\alpha$-ize'' any Bellman function for $\BMO(\mathbb{R}),$ and these are now plentiful: following~\cite{sv1}, the papers~\cite{iosvz1} and \cite{iosvz2} develop a rather general method for computing them; a somewhat different example is given in~\cite{vv}.

Though motivated by inequalities for the dyadic $\BMO(\mathbb{R}^n),$ Bellman analysis on $\alpha$-trees has wider applications. First, it works in other geometric settings; one important example is supplied by spaces of homogeneous type, where the ``dyadic cubes'' of M.~Christ~\cite{christ} give rise to $\alpha$-trees with $1/\alpha$ comparable to the doubling constant. Second, it naturally extends to other function classes that yield non-convex Bellman domains, such as  $A_p$ and reverse H\"older classes, $A_p$-weighted $L^p,$ etc. Lastly, when properly modified to include the range $\alpha\in(1/2,1]$ it may allow one to obtain results for continuous BMO simply by taking a suitable supremum in $\alpha.$ In this modification, described in Section~\ref{martingales}, each BMO function generates its own tree and thus yields what we call an $\alpha$-martingale.

The rest of the paper is organized as follows: in Section~\ref{tools} we define $\alpha$-concave/convex functions and formalize Bellman induction on $\alpha$-trees using such functions. We also present a set of easy-to-verify sufficient conditions for a function to be $\alpha$-concave/convex. In Section~\ref{JN} we prove Theorem~\ref{ta}, which allows us to compute the exact Bellman function for the John--Nirenberg inequality for $\BMO^d(\mathbb{R}^n);$ Theorem~\ref{t1} then follows easily. This sequence is repeated in Section~\ref{1-2}: we first prove Theorem~\ref{thb}, then define and find the corresponding Bellman function, which immediately gives Theorem~\ref{t6}. Finally, in Section~\ref{martingales} we introduce the notion of an $\alpha$-martingale, state a general result about such martingales generated by BMO functions, and estimate the John--Nirenberg constant for the continuous $\BMO(\mathbb{R}^n)$ in terms of a special martingale-related parameter of the space.

\section{Main technical tools}
\label{tools}
\noindent For $\ve>0,$ let
$$
\Oe=\{x=(x_1,x_2):~x_1^2\le x_2\le x_1^2+\ve^2\}.
$$
Also, for any $\xi\ge0,$ let $\Gamma_\xi=\{x:x_2=x_1^2+\xi^2\};$ thus, $\Gamma_0$ and $\Gamma_\ve$ are the lower and upper boundaries of $\Oe,$ respectively. 
A line segment connecting points $x$ and $y$ in the plane will be denoted by $[x,y]$ and the length of such a segment, by $\len{x}{y}.$ 

Observe that if $\T$ is an $\alpha$-tree on $(X,\mu)$ and $\varphi\in\BMO_\ve(\T),$ then for any $J\in\T,$ 
$\avm{\varphi}J^2\le\avm{\varphi^2}J\le\avm{\varphi}J^2+\ve^2$ and so the point $(\avm{\varphi}J,\avm{\varphi^2}J)$ is in $\Oe.$ This simple fact is the geometric foundation of the Bellman approach to the $L^2$-based $\BMO,$ which consists of using an appropriate concave or convex function on $\Oe$ to perform induction on the generation of the tree and bound the desired integral in the limit. However, since $\Oe$ is itself non-convex, we need to strengthen the usual notion of concavity/convexity.

\begin{definition}
A function on $\Oe$ is called locally concave (respectively, locally convex), if it is concave (respectively, convex) on any convex subset of $\Oe.$
\end{definition}

\begin{definition}
\label{def}
If $\alpha\in\big(0,\frac12\big],$ a function $B$ on $\Oe$ is called $\alpha$-concave if
\eq[601]{
B(\beta x^-+(1-\beta) x^+)\ge \beta B(x^-)+(1-\beta) B(x^+),\\
}
for any $\beta\in\big[\alpha,\frac12\big]$ and any two points $x^\pm\in\Oe$ such that $\beta x^-+(1-\beta) x^+\in\Oe.$

Similarly, $b$ is called $\alpha$-convex on $\Oe$ if 
\eq[602]{
b(\beta x^-+(1-\beta) x^+)\le \beta b(x^-)+(1-\beta) b(x^+),\\
}
for all $\beta, x^-,$ and $x^+$ as above.
\end{definition}

Armed with such a function, we can obtain the desired integral estimate using what is commonly referred to as Bellman induction. The procedure depends on a simple geometric fact that replaces the average of $N$ points in $\Oe$ with the average of just two points.
\begin{lemma}
\label{L0}
If $N\ge2,$ numbers $\alpha_1,...,\alpha_N\in(0,1)$ are such that $\sum_{k=1}^N\alpha_k=1,$  and points $P_1,...,P_N\in\Oe$ are such that $\sum_{k=1}^N\alpha_kP_k\in\Oe,$  then there exists at least one $j,$ $1\le j\le N,$ for which the point
$
R_j:=\frac1{1-\alpha_j}\,\sum_{k\ne j} \alpha_kP_k
$
is in~$\Oe.$
\end{lemma}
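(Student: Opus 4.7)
The plan is to argue by contradiction, splitting the condition $R_j\in\Omega_\varepsilon$ into its two parts: $R_j$ lies on or above the lower parabola $\Gamma_0$, and on or below the upper parabola $\Gamma_\varepsilon$. The first part will turn out to be automatic, and the obstruction is entirely in the second part; the key move will be to write the given point $P:=\sum_k \alpha_k P_k$ as a convex combination of the $R_j$'s themselves, so that if every $R_j$ failed to lie under $\Gamma_\varepsilon$ then $P$ would too.

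For the lower bound, observe that $R_j=\sum_{k\ne j}\frac{\alpha_k}{1-\alpha_j}P_k$ is a convex combination of points $P_k$ with $(P_k)_2\ge(P_k)_1^2$. Since the map $t\mapsto t^2$ is convex, Jensen's inequality gives $(R_j)_2\ge (R_j)_1^2$, so $R_j$ sits on or above $\Gamma_0$ no matter which $j$ we choose. Thus the only way $R_j$ could fail to lie in $\Omega_\varepsilon$ is by sitting strictly above $\Gamma_\varepsilon$, i.e., $(R_j)_2>(R_j)_1^2+\varepsilon^2$.

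For the upper bound, I would assume toward a contradiction that $(R_j)_2>(R_j)_1^2+\varepsilon^2$ for every $j\in\{1,\dots,N\}$, and note that the region $U:=\{x:x_2>x_1^2+\varepsilon^2\}$ is convex (it is the strict epigraph of the convex function $x_1\mapsto x_1^2+\varepsilon^2$). The crux is then to produce nonnegative weights $\beta_j$ summing to $1$ such that $\sum_j\beta_j R_j=P$. Taking $\beta_j=\frac{1-\alpha_j}{N-1}$ does the job: these are nonnegative (since $\alpha_j<1$), they sum to $\frac{N-\sum\alpha_j}{N-1}=1$, and a direct calculation gives
\[
\sum_{j=1}^N\frac{1-\alpha_j}{N-1}\,R_j=\frac{1}{N-1}\sum_{j=1}^N\bigl(P-\alpha_jP_j\bigr)=\frac{NP-P}{N-1}=P.
\]
Since $U$ is convex and each $R_j\in U$, this forces $P\in U$, contradicting $P\in\Omega_\varepsilon$.

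The main obstacle—really the only non-routine step—is spotting the right convex combination expressing $P$ in terms of the $R_j$'s. Once the weights $\beta_j=(1-\alpha_j)/(N-1)$ are written down, convexity of $U$ closes the argument immediately, and the lower-parabola side is handled by a one-line Jensen estimate. I expect no serious technical difficulty beyond this identification.
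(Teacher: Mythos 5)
Your proof is correct and follows essentially the same route as the paper's: argue by contradiction, note that each $R_j$ lies on or above $\Gamma_0$ by convexity of $\{x:x_2\ge x_1^2\}$, and then show that if every $R_j$ lay strictly above $\Gamma_\ve$, the convex combination $\sum_j \frac{1-\alpha_j}{N-1}R_j = P$ would too, contradicting $P\in\Oe$. The weights $\beta_j=(1-\alpha_j)/(N-1)$ are exactly the ones the paper uses.
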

\begin{proof}
Assume, to the contrary, that $R_j\notin\Omega_\ve,\forall j=1,...,N.$ Since the set $\{x:x_2\ge x_1^2\}$ is convex, all $R_j$ are in this set. Since none is in $\Oe,$ all are, in fact, in the set $\{x:x_2>x_1^2+\ve^2\},$ which is also convex. Therefore, their convex combination,
$$
\frac1{N-1}\sum_{j=1}^N(1-\alpha_j) R_j=\frac1{N-1}\sum_{j=1}^N\Big(\sum_{k=1}^N\alpha_kP_k-\alpha_jP_j\Big)=\sum_{k=1}^N\alpha_kP_k,
$$
is also in $\{x:x_2>x_1^2+\ve^2\},$ which is a contradiction.
\end{proof}
\begin{lemma}
\label{L1}
Take $\alpha\in(0,1/2]$ and let $\T$ be an $\alpha$-tree on a measure space $(X,\mu).$ Let $\varphi$ be a $\T$-simple function\textup; set  $\ve=\|\varphi\|_{\BMO(\T)}$ \textup(note that $\varphi$ is bounded and thus in $\BMO(\T)$\textup).
\ben
\item[(i)]
If $B$ is an $\alpha$-concave function on $\Oe,$ then
$$
B\big(\avm{\varphi}X,\avm{\varphi^2}X\big)\ge \frac1{\mu(X)}\,\int_X B(\varphi,\varphi^2)\,d\mu.
$$
\item[(ii)]
If $b$ is an $\alpha$-convex function on $\Oe,$ then
$$
b\big(\avm{\varphi}X,\avm{\varphi^2}X\big)\le \frac1{\mu(X)}\,\int_X b(\varphi,\varphi^2)\,d\mu.
$$
\een
\end{lemma}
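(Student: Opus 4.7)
My plan is a two-level induction. The outer induction walks up the generations of $\T$; the inner induction (used at each outer step) upgrades the two-point $\alpha$-concavity of Definition \ref{def} to a multi-point version, so that the full children decomposition at a node can be handled in one move. I treat part (i); part (ii) is entirely symmetric, because Lemma \ref{L0} uses only the geometry of $\Oe$ and the two-point inequality \eqref{602} has the same structure as \eqref{601} with the sign reversed. As a preparatory observation I record that, for every $J \in \T$, the point $P_J := (\avm{\varphi}{J}, \avm{\varphi^2}{J})$ lies in $\Oe$ (Jensen gives $x_1^2 \le x_2$ and the choice $\ve = \|\varphi\|_{\BMO(\T)}$ gives $x_2 \le x_1^2 + \ve^2$), and that $P_J = \sum_{I \in C(J)} \frac{\mu(I)}{\mu(J)} P_I$ is a convex combination with every weight at least $\alpha$.

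The inner claim is: whenever $P_1,\ldots,P_N \in \Oe$ and $\alpha_1,\ldots,\alpha_N \ge \alpha$ sum to one with $\sum_k \alpha_k P_k \in \Oe$, one has $B(\sum_k \alpha_k P_k) \ge \sum_k \alpha_k B(P_k)$. I would prove this by induction on $N$. The case $N\le 2$ is just the definition of $\alpha$-concavity, once one observes that at least one of the two weights lies in $[\alpha,1/2]$. For $N\ge 3$, Lemma \ref{L0} produces an index $j$ for which $R_j := \frac{1}{1-\alpha_j}\sum_{k\ne j}\alpha_k P_k$ remains in $\Oe$. Writing $\sum_k \alpha_k P_k = \alpha_j P_j + (1-\alpha_j) R_j$ and applying $\alpha$-concavity to this pair --- since $\alpha \le \alpha_j \le 1-\alpha$, one of $\alpha_j,\,1-\alpha_j$ falls in $[\alpha,1/2]$, perhaps after swapping $P_j$ and $R_j$ --- gives $B(\sum_k \alpha_k P_k) \ge \alpha_j B(P_j) + (1-\alpha_j) B(R_j)$. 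The renormalized weights $\alpha_k/(1-\alpha_j)$ defining $R_j$ are still $\ge \alpha$ (because $\alpha_k \ge \alpha$ and $1-\alpha_j \le 1$), so the inductive hypothesis at level $N-1$ bounds $B(R_j)$ below by $\sum_{k\ne j}\frac{\alpha_k}{1-\alpha_j} B(P_k)$; substituting and canceling $1-\alpha_j$ closes the step.

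The outer induction is then routine. Since $\varphi$ is $\T$-simple, fix $N$ so that $\varphi$ is $\mu$-a.e.\ constant on every $J \in \T_N$, and set $F(J) := B(P_J) - \frac{1}{\mu(J)}\int_J B(\varphi,\varphi^2)\,d\mu$. I would show $F(J)\ge 0$ for every $J \in \bigcup_{k\le N}\T_k$ by downward induction on the generation of $J$: for $J \in \T_N$ the integrand is constant $\mu$-a.e., so $F(J)=0$; for $J$ in an earlier generation, the inner claim applied to $P_J = \sum_{I\in C(J)}\frac{\mu(I)}{\mu(J)} P_I$ yields $B(P_J) \ge \sum_{I\in C(J)}\frac{\mu(I)}{\mu(J)} B(P_I)$, and the inductive hypothesis bounds each $B(P_I)$ below by $\frac{1}{\mu(I)}\int_I B(\varphi,\varphi^2)\,d\mu$, which combine (by the disjointness of the children) into $\frac{1}{\mu(J)}\int_J B(\varphi,\varphi^2)\,d\mu$. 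Setting $J=X$ finishes part (i). The main technical obstacle is the weight bookkeeping in the inner induction --- verifying that each renormalized set of weights and each new midpoint remain admissible --- which is why I would prove the multi-point $\alpha$-concavity as a standalone lemma before entering the tree argument.
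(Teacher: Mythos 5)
Your proof is correct and follows essentially the same route as the paper: both peel off one point at a time via Lemma~\ref{L0}, apply two-point $\alpha$-concavity at each peel, and then iterate over the generations of $\T$ down to the level where $\varphi$ is constant. Formulating the multi-point $\alpha$-concavity as a standalone lemma and running the tree induction downward rather than unfolding forward from $X$ is merely an organizational variant of the paper's inline argument.
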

\begin{proof}

We will prove only statement~(i); the proof of~(ii) is the same, except all inequality signs are reversed. For all $I\in\T,$ let $P_I=\big(\avm{\varphi}I,\avm{\varphi^2}I\big)$ and $\mu_I=\mu(I).$ Since $\varphi\in\BMO_\ve(\T),$ we have $P_I\in\Oe.$ Furthermore, 
$
P_I=\frac1{\mu_I}\sum_{J\in\T_1(I)}\mu_J\,P_J.
$

We first claim that for any $I\in\T,$
\eq[ind]{
B(P_I)\ge\frac1{\mu_I}\sum_{J\in\T_1(I)}\mu_J\,B(P_J).
}
If $\T_1(I)$ has only one element, then~\eqref{ind} holds with equality. Assume that $\T_1(I)$ has two or more elements. By Lemma~\ref{L0}, there exists an $L\in\T_1(I)$ such that the point
$$
R_{L}:=\frac1{\mu_I-\mu_L}\,\sum_{\substack{ J\in\T_1(I) \\ J\ne L}}\mu_J\,P_{J}
$$
is in $\Oe.$ We have 
$
P_I=\frac{\mu_L}{\mu_I}\,P_L+\big(1-\frac{\mu_L}{\mu_I}\big)\,R_L,
$
and, since $\T$ is an $\alpha$-tree, both $\mu_L/\mu_I\ge\alpha$ and $1-\mu_L/\mu_I\ge\alpha.$ By the $\alpha$-concavity of $B,$
$$
B(P_I)\ge \frac{\mu_L}{\mu_I}\,B(P_L)+\Big(1-\frac{\mu_L}{\mu_I}\Big)\,B(R_L).
$$
If $\T_1(I)$ has two elements, this is exactly statement~\eqref{ind}. If $\T_1(I)$ has more than two elements, we apply Lemma~\ref{L0} to $R_L$ in place of $P_I:$
$$
B(R_L)\ge \frac{\mu_K}{\mu_I-\mu_L}\,B(P_K)+\Big(1-\frac{\mu_K}{\mu_I-\mu_L}\Big)\,B(R_{L,K}),
$$
for some $K\in\T_1(I), K\ne L,$ and $R_{L,K}=\frac1{\mu_I-\mu_L-\mu_K}\,\sum_{J\ne L,K}\mu_J\,P_{J}.$ This gives
$$
B(P_I)\ge \frac{\mu_L}{\mu_I}\,B(P_L)+\frac{\mu_K}{\mu_I}\,B(P_K)+\Big(1-\frac{\mu_L+\mu_K}{\mu_I}\Big)\,B(R_{L,K}).
$$
Continuing in this fashion, we obtain~\eqref{ind}.

Now, let $N$ be such that $\varphi$ is constant on each $J\in\T_N.$ Note that this means that $\avm{\varphi^2}J=\avm{\varphi}J^2$ for all such $J.$ A repeated application of~\eqref{ind} gives
\begin{align*}
B(P_X)&\ge \frac1{\mu_X}\sum_{J\in\T_1}\mu_J\,B(P_J)\ge \frac1{\mu_X}\sum_{J\in\T_1}\,\sum_{R\in\T_1(J)}\mu_R\,B(P_R)
= \frac1{\mu_X}\sum_{J\in\T_2}\mu_J\,B(P_J)\\
&\ge\dots\ge  \frac1{\mu_X}\sum_{J\in\T_N}\mu_J\,B(P_J)= \frac1{\mu_X}\sum_{J\in\T_N}\mu_J\,B\big(\avm{\varphi}J,\avm{\varphi}J^2\big).
\end{align*}
The last expression is precisely $\frac1{\mu_X}\int_XB(\varphi,\varphi^2)\,d\mu,$ and the proof is complete.
\end{proof}
 
Our next lemma gives sufficient conditions for a function on $\Oe$ to be $\alpha$-concave/convex.
\begin{lemma}
\label{L2}
Let $\ve>0$ and $\alpha\in(0,\frac12].$
Assume that functions $B$ and $b$ on $\Oe$ satisfy the following three conditions\textup:
\ben
\item
\label{C1}
$B$ is locally concave on $\Oe$ and $b$ is locally convex on $\Oe,$
\item
\label{C2}
$B$ and $b$ have non-tangential derivatives at every point $\Gamma_\ve.$ Furthermore, for any two distinct points on $\Gamma_\ve,$  $P=(p,p^2+\ve^2)$ and $Q=(q,q^2+\ve^2)$ with $|p-q|\le\frac{1-\alpha}{\sqrt\alpha}\,\ve,$
\begin{align*}
(D_{\scriptscriptstyle\overrightarrow{PQ}}B)(P)&\ge (D_{\scriptscriptstyle\overrightarrow{PQ}}B)(Q),\\
(D_{\scriptscriptstyle\overrightarrow{PQ}}b)(P)&\le (D_{\scriptscriptstyle\overrightarrow{PQ}}b)(Q),
\end{align*}
where $D_{_{\overrightarrow{PQ}}}$ denotes the derivative in the direction of the vector $\overrightarrow{PQ}.$
\item
\label{C3}
For any $P$ and $Q$ as above, and $S=\frac1{1-\alpha}(P-\alpha Q),$
\begin{align*}
B(P)&\ge (1-\alpha)\, B(S)+\alpha\, B(Q),\\
b(P)&\le (1-\alpha)\, b(S)+\alpha\, b(Q).
\end{align*}
\een
Then $B$ is $\alpha$-concave on $\Oe$ and  $b$ is $\alpha$-convex on $\Oe.$
\end{lemma}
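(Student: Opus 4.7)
\emph{Plan.} I would prove the lemma by case analysis on whether the segment $[x^-,x^+]$ lies inside $\Oe$ or leaves it; I treat only the $\alpha$-concavity of $B$, the $\alpha$-convexity of $b$ being analogous with reversed inequalities. In the \emph{interior case} $[x^-,x^+]\subset\Oe$, the segment is a convex subset of $\Oe$, and condition~(1) immediately yields the chord inequality for every weight in $[0,1]$, in particular the claimed $\alpha$-concavity inequality.

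In the \emph{boundary case}, the segment exits $\Oe$. Since $\{x:x_2\ge x_1^2\}$ is convex and contains $\Oe$, the segment cannot cross the lower parabola $\Gamma_0$; it must exit through $\Gamma_\ve$. Because $\Gamma_\ve$ is the graph of a convex function, the segment meets $\Gamma_\ve$ at exactly two points, which I label $P=(p,p^2+\ve^2)$ and $Q=(q,q^2+\ve^2)$, with the open subsegment $(P,Q)$ lying strictly above $\Gamma_\ve$. Using the symmetry of the $\alpha$-concavity statement under the simultaneous swap $x^-\leftrightarrow x^+$ and $\beta\leftrightarrow 1-\beta$, I assume the collinear order on the line is $x^-,P,Q,x^+$ and that $c=\beta x^-+(1-\beta)x^+$ lies in $[Q,x^+]$.

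The geometric observation at the heart of the argument, verified by a direct quadratic computation in the defining inequalities of $\Oe$, is that the hypotheses $x^\pm\in\Oe$, $c\in\Oe$, and $\beta\in[\alpha,1-\alpha]$ together force $|p-q|\le\frac{1-\alpha}{\sqrt\alpha}\ve$; this is precisely the range in which the size bound in conditions~(2) and~(3) applies, and exactly the range in which the auxiliary point $S=\frac{1}{1-\alpha}(P-\alpha Q)$ of condition~(3) lies in $\Oe$. With the constraint available, I would chain three estimates. Local concavity on $[Q,x^+]\subset\Oe$ gives $B(c)\ge\lambda B(Q)+(1-\lambda)B(x^+)$ with $c=\lambda Q+(1-\lambda)x^+$; writing $Q=\nu x^-+(1-\nu)x^+$ a short algebraic check yields $\lambda\nu=\beta$, reducing everything to the boundary chord inequality $B(Q)\ge\nu B(x^-)+(1-\nu)B(x^+)$. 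Next, condition~(3) applied to $(Q,P)$ produces $B(Q)\ge(1-\alpha)B(S^*)+\alpha B(P)$ with $S^*=\frac{1}{1-\alpha}(Q-\alpha P)\in\Oe$. Finally, local concavity on the pieces $[x^-,P]$ and $[Q,x^+]$ (which contain $S$ and $S^*$, respectively) combined with condition~(2), the directional-derivative monotonicity along $\Gamma_\ve$, lets me express $B(S^*)$ and $B(P)$ as convex combinations of $B(x^-),B(Q),B(x^+)$, and solving the resulting linear system produces the desired boundary chord inequality at $Q$.

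The main obstacle I anticipate is the geometric bookkeeping: pinning down where the auxiliary points $S$ and $S^*$ fall within the subsegments $[x^-,P]$ and $[Q,x^+]$, and verifying that the chained inequalities combine cleanly with the correct weights. Condition~(2) is the subtlest of the three hypotheses, and identifying the exact extremal configuration in which it is indispensable (rather than redundant given conditions~(1) and~(3)) is where I would expect the proof to require the greatest care; the natural candidate is the boundary-of-the-bound case $|p-q|=\frac{1-\alpha}{\sqrt\alpha}\ve$, at which $S$ slides onto $\Gamma_0$ and the chord estimate of condition~(3) is tight, so that only the derivative comparison of~(2) can provide the extra slack needed to close the inequality.
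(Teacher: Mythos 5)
Your proposal is correct and follows essentially the same route as the paper's proof: reduce the boundary-crossing case to the chord inequality at one endpoint of the exterior segment (you pick $Q$; the paper picks $P$, a symmetric choice), apply Condition~(3) there, and close via local concavity together with the directional-derivative comparison of Condition~(2), having first observed that the configuration forces $|p-q|\le\frac{1-\alpha}{\sqrt\alpha}\,\ve$. The only blemishes are cosmetic: the parenthetical claim that $S\in[x^-,P]$ need not hold (and is not used, since your chain only needs $S^*\in[Q,x^+]$, which does follow from $\beta\ge\alpha$), and the phrase ``convex combinations'' should be ``affine combinations'' in the final step, where the coefficient of $B(Q)$ is negative.
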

\begin{remark}
Before proving this lemma, let us examine its conditions more closely. First, Conditions~\eqref{C1} and~\eqref{C3} are clearly necessary for $\alpha$-concavity/convexity. Second, Condition~\eqref{C2} is also quite natural: along any line segment fully contained in $\Oe$ the directional derivatives of $B$ and $b$ (defined almost everywhere) are monotone. This condition can be seen as preserving this monotonicity of derivatives even for those segments that cross the boundary curve $\Gamma_\ve$ at two points, as long as the part external to $\Oe$ ($[P,Q]$in this case) is not too large. Note that $B$ and $b$ need not be defined along the segment $[P,Q]$ itself.

\end{remark}

\begin{proof}
Again, we only prove the statement for $B.$ If the segment $[x^-,x^+]$ is contained in $\Oe,$ inequality~\eqref{601} holds because $B$ is locally concave in $\Oe.$ Thus, we need to consider only those segments that cross $\Gamma_\ve$ at two points. 

Take any $\beta\in[\alpha,\frac12]$ and any $x^-,x^+\in\Oe$ such that $x^0:=\beta\, x^-+(1-\beta)\,x^+\in\Oe.$ Let $\ell$ be the line containing the three points $x^-,x^0,$ and $x^+$ and assume that $\ell$ intersects $\Gamma_\ve$ at two points,  $P$ and $Q.$ Then $\ell$ also intersects $\Gamma_0$ at two points, say $U$ and $W,$ named so that the order of points on $\ell$ is $U, P, Q,W.$ Without loss of generality, assume that $x^-,x^0\in[U,P]$ and $x^+\in[Q,W].$ Finally let $S\in[U,W]$ be such that $\len S P=\alpha\len S Q,$ that is $P=(1-\alpha)\,S+\alpha\,Q.$ Figure~\ref{fig1} shows the location of all these points for $\alpha=\frac16$ and $\beta=\frac14.$ 

\begin{figure}[ht]
  \centering{\includegraphics[width=12cm]{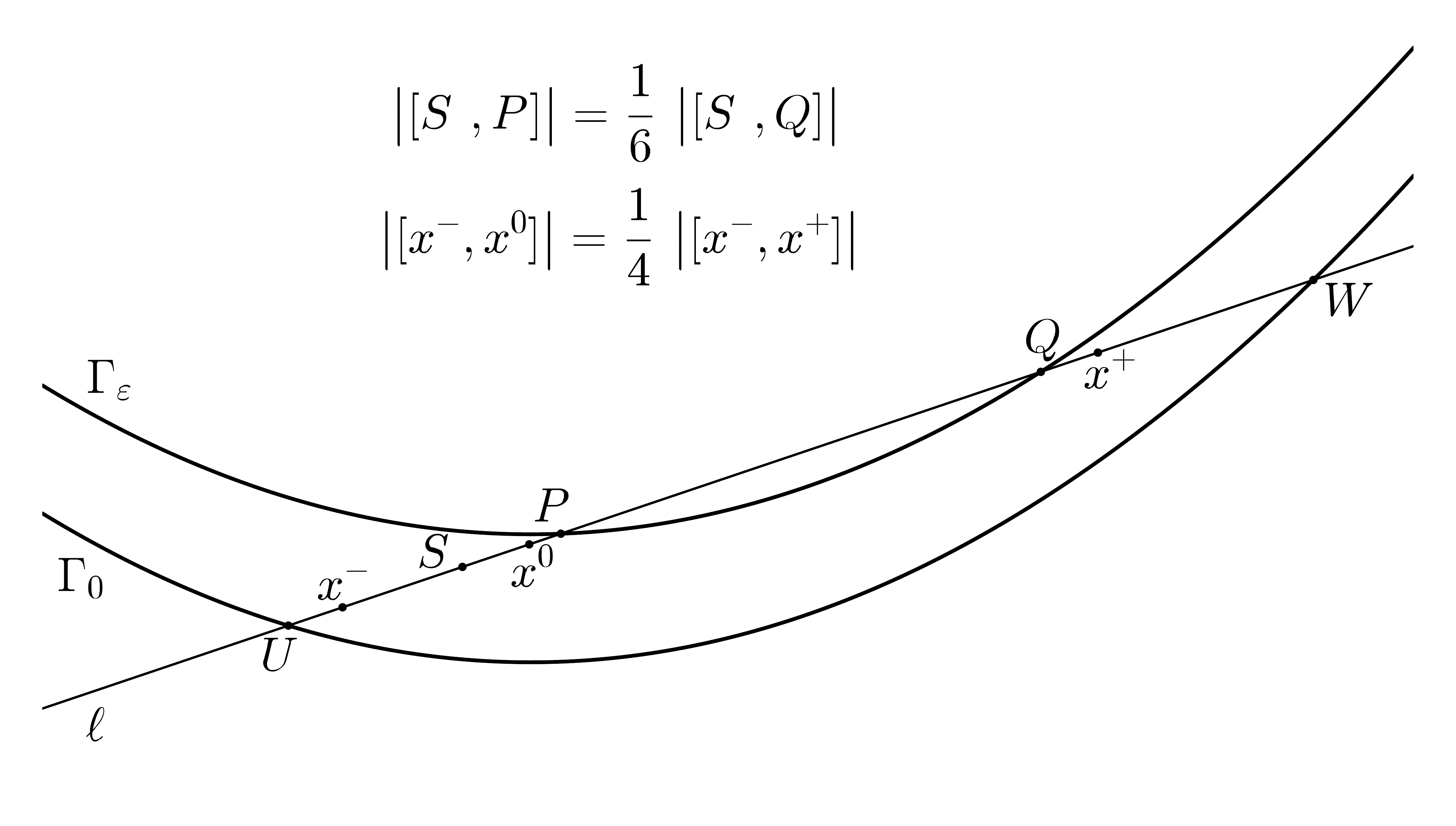}}
\caption{Picture for the proof of Lemma~\ref{L2} with $\alpha=\frac16$ and $\beta=\frac14.$}
  \label{fig1}
\end{figure}

The essence of the lemma is that if Conditions~\eqref{C1} and~\eqref{C2} are fulfilled, then inequality~\eqref{601} reduces to its special case assumed in Condition~\eqref{C3}, i.e., to the situation when $\beta=\alpha$ and $x^0,x^+\in\Gamma_\ve.$ In fact, it will be technically convenient to show a little more. Specifically, we claim that if
\eq[661]{
B(P)\ge (1-\alpha) B(S)+\alpha\,B(Q),
}
then 
\eq[662]{
B(x^0)\ge \frac{\len{x^0}{x^+}}{\len{x^-}{x^+}}B(x^-)+\frac{\len{x^-}{x^0}}{\len{x^-}{x^+}}B(x^+),
}
for any $x^-\in[U,S],$ $x^0\in[x^-,P],$ and $x^+\in[Q,W].$

Parametrize the segment $[U,W]$ by $x(t)=(1-t)U+tW,$ $0\le t\le 1.$ Let $t_-,t_S,t_0,t_P,t_Q,$ and $t_+$ be the values of the parameter corresponding to the points $x^-,S,x^0,P,Q,$ and $x^+,$ respectively. Let $A(t)=B(x(t))$ and define a new function $F$ by
$$
F(t_-,t_0,t_+)=(t_+-t_-)A(t_0)-(t_+-t_0)A(t_-)-(t_0-t_-)A(t_+)
$$
on the domain
$$
D=\{(t_-,t_0,t_+):0\le t_-\le t_S,\quad t_-\le t_0\le t_P,\quad t_Q\le t_+\le 1\}.
$$
In this notation, condition~\eqref{661} is equivalent to
\eq[663]{
F( t_S,t_P,t_Q)\ge0,
}
and to prove that~\eqref{661} implies~\eqref{662} is the same as to prove that~\eqref{663} implies
\eq[664]{
F(t_-,t_0,t_+)\ge0,\quad \forall (t_-,t_0,t_+)\in D.
}
Thus, assume that~\eqref{663} holds.

Since $B$ is concave in $\Oe,$ $A$ is concave on $[0,t_P]$, and, viewed as a function of $t_0$, $F$ is just an affine transformation of $A$, with a positive coefficient. Therefore, $F$ is concave in $t_0$ on $[t_-,t_P],$ and $\min_{D} F=\min\{F(t_-,t_-,t_+),F(t_-,t_P,t_+)\}.$ Since $F(t_-,t_-,t_+)=0,$ to prove~\eqref{663} we need to show that $F(t_-,t_P,t_+)\ge0.$

We have
\begin{align*}
F(t_-,t_P,t_+)&=(t_+-t_-)A(t_P)-(t_+-t_P)A(t_-)-(t_P-t_-)A(t_+)\\
&=(t_+-t_P)(t_P-t_-)\,\left[\frac{A(t_P)-A(t_-)}{t_P-t_-}\right]-(t_P-t_-)(A(t_+)-A(t_P))\\
&\ge(t_+-t_P)(t_P-t_-)\,\left[\frac{A(t_P)-A(t_S)}{t_P-t_S}\right]-(t_P-t_-)(A(t_+)-A(t_P))\\
&=\frac{t_P-t_-}{t_P-t_S}\,F(t_S,t_P,t_+).
\end{align*}
where we used the concavity of $A$ on $[0,t_P].$ To estimate~$F(t_S,t_P,t_+),$ we write:
$$
F(t_S,t_P,t_+)=F(t_S,t_P,t_Q)+(t_+-t_Q)(t_P-t_S)\left[\frac{A(t_P)-A(t_S)}{t_P-t_S}-\frac{A(t_+)-A(t_Q)}{t_+-t_Q}\right].
$$
The first term is non-negative by~\eqref{663}, while the expression in brackets can be estimated by $A'(t_P)-A'(t_Q),$ which is non-negative by Condition~\eqref{C2}. This completes 
the proof.
\end{proof}

\section{The John--Nirenberg inequality}
\label{JN}

Here we prove Theorems~\ref{t1} and~\ref{ta} (in the reverse order), by choosing the smallest $\alpha$-concave element from a family of locally concave functions developed in~\cite{sv}. We also obtain another significant result, not stated in the introduction: Theorem~\ref{thB} gives the exact Bellman function for the John--Nirenberg inequality on $n$-dimensional dyadic BMO.
 
Recall definitions~\eqref{21} of $\ve_0^\alpha$ and $K(\alpha,\ve).$ Let 
$$
g(\alpha,\delta,\ve)=\frac{1-\rtde}{1-\delta}e^{-\delta+\rtde}-K(\alpha,\ve).
$$
\begin{lemma}
\label{LVA}
For each $\alpha\in(0,\frac12]$ and each $\ve\in(0,\ve_0^\alpha)$ the equation $g(\alpha,\delta,\ve)=0$ has a unique solution $\delta(\alpha,\ve)$ such that 
\eq[de]{
\ve<\delta(\alpha,\ve)<\min\Big\{1,\frac{1+\alpha}{2\sqrt\alpha}\,\ve\Big\}.
}
\end{lemma}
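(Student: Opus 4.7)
The plan is to fix $\alpha\in(0,\tfrac12]$ and $\varepsilon\in(0,\varepsilon_0^\alpha)$, write
\[
g(\alpha,\delta,\varepsilon)=h(\delta)-K(\alpha,\varepsilon),\qquad h(\delta):=\frac{1-\sqrt{\delta^2-\varepsilon^2}}{1-\delta}\,e^{-\delta+\sqrt{\delta^2-\varepsilon^2}},
\]
and reduce both uniqueness and the stated localisation to two ingredients: strict monotonicity of $h$ on $[\varepsilon,1)$, and a sign change of $g(\alpha,\cdot,\varepsilon)$ between the endpoints $\varepsilon$ and $\delta^\star:=\min\{1,\tfrac{1+\alpha}{2\sqrt\alpha}\varepsilon\}$. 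An intermediate value argument then supplies a unique $\delta(\alpha,\varepsilon)\in(\varepsilon,\delta^\star)$.

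Monotonicity is the easy step: with $s=\sqrt{\delta^2-\varepsilon^2}$, logarithmic differentiation yields
\[
\frac{h'(\delta)}{h(\delta)}=\frac{\delta\,(\delta-s)}{(1-\delta)(1-s)},
\]
which is strictly positive on $(\varepsilon,1)$ since $0\le s<\delta<1$. For the left endpoint, $h(\varepsilon)=e^{-\varepsilon}/(1-\varepsilon)$, and clearing denominators turns $h(\varepsilon)<K(\alpha,\varepsilon)$ into positivity of
\[
\psi(\varepsilon):=(1-\alpha)(1-\varepsilon)-e^{(\sqrt\alpha-1)\varepsilon}+\alpha\,e^{(1/\sqrt\alpha-1)\varepsilon}.
\]
A routine computation gives $\psi(0)=\psi'(0)=0$ together with $\psi''(\varepsilon)=(1-\sqrt\alpha)^2\bigl[e^{(1/\sqrt\alpha-1)\varepsilon}-e^{(\sqrt\alpha-1)\varepsilon}\bigr]>0$ for $\varepsilon>0$, so $\psi>0$ throughout $(0,\varepsilon_0^\alpha)$.

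The right endpoint splits into two subcases. If $\tfrac{1+\alpha}{2\sqrt\alpha}\varepsilon\ge 1$ then $\delta^\star=1$, and $h(\delta)\to+\infty$ as $\delta\to 1^-$ because $1-\sqrt{1-\varepsilon^2}>0$, so the sign change is automatic. If $\tfrac{1+\alpha}{2\sqrt\alpha}\varepsilon<1$, I would introduce the symmetric variables $u:=\sqrt\alpha\,\varepsilon$ and $v:=\varepsilon/\sqrt\alpha$; then $K(\alpha,\varepsilon)=(1-\alpha)/(e^u-\alpha e^v)$, and the identities $\delta^\star=\tfrac{u+v}{2}$ and $\sqrt{(\delta^\star)^2-\varepsilon^2}=\tfrac{v-u}{2}$ hold (this is where the numerical constant $\tfrac{1+\alpha}{2\sqrt\alpha}$ earns its keep). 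A short algebraic cancellation reduces $h(\delta^\star)>K(\alpha,\varepsilon)$ to $\alpha\,\eta(w)>0$ with $w:=v-u=\tfrac{1-\alpha}{\sqrt\alpha}\varepsilon\in(0,2)$ and $\eta(w):=(w+2)-(2-w)e^w$. Since $\eta(0)=\eta'(0)=0$ and $\eta''(w)=we^w>0$, the desired positivity follows.

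The main obstacle is the constrained subcase above: $h(\delta^\star)$ and $K(\alpha,\varepsilon)$ appear algebraically unrelated as written, and the whole argument hinges on recognising that the substitution $u=\sqrt\alpha\varepsilon$, $v=\varepsilon/\sqrt\alpha$ is exactly the one that makes $\delta^\star=\tfrac{u+v}{2}$ the point at which the two sides admit a common form, so that everything collapses to the one-variable Taylor inequality for $\eta$.
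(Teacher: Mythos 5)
Your proposal is correct and follows essentially the same route as the paper: strict monotonicity of $h$ (equivalently $\partial g/\partial\delta>0$), a sign check at $\delta=\varepsilon$, and a sign check at $\delta^\star$ split into the same two subcases. Your Taylor verifications for $\psi$ and $\eta$ are just reformulations of the paper's series expansion at the left endpoint and of its $\tfrac{w}{2}\cosh\tfrac{w}{2}-\sinh\tfrac{w}{2}>0$ estimate at the right one (indeed $e^{w/2}\bigl(\tfrac{w}{2}\cosh\tfrac{w}{2}-\sinh\tfrac{w}{2}\bigr)=\eta(w)/4$), so the content is identical, if a bit more streamlined.
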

\begin{proof}
For any $\delta\in(0,1),$ we have 
$$
\frac{\partial g}{\partial\delta}=\frac{\delta(\delta-\rtde)}{(1-\delta)^2}\,e^{-\delta+\rtde}>0.
$$
We need to check the sign of $g(\alpha,\delta,\ve)$ at the two endpoints, $\delta=\ve$ and $\delta=\min\big\{1,\frac{1+\alpha}{2\sqrt\alpha}\,\ve\big\}.$ For $\delta=\ve$ we compute:
\begin{align*}
g(\alpha,\ve,\ve)&=\frac{e^{-\sqrt\alpha\ve}}{(1-\ve)\big(1-\alpha e^{\frac{1-\alpha}{\sqrt\alpha}\ve}\big)}\,
\left[e^{-(1-\sqrt\alpha)\ve}-\alpha e^{(\frac1{\sqrt\alpha}-1)\ve}-(1-\alpha)(1-\ve)\right]\\
&=\frac{e^{-\sqrt\alpha\ve}}{(1-\ve)\big(1-\alpha e^{\frac{1-\alpha}{\sqrt\alpha}\ve}\big)}\,
\left[\sum_{k=3}^\infty\frac{\ve^k}{k!}\,(1-\sqrt\alpha)^k\big((-1)^k-\alpha^{1-k/2}\big)\right]<0.
\end{align*}
To check the right endpoint, assume first that $0<\ve<\frac{2\sqrt\alpha}{1+\alpha}.$ Then, after a bit of algebra, we obtain
\begin{align*}
g\Big(\alpha,\frac{1+\alpha}{2\sqrt\alpha}\,\ve,\ve\Big)&=
\frac{2\sqrt\alpha-(1-\alpha)\ve}{2\sqrt\alpha-(1+\alpha)\ve}\,e^{-\sqrt\alpha\ve}-K(\alpha,\ve)\\
&=\frac{2\alpha\,e^{\frac{1-3\alpha}{2\sqrt\alpha}\ve}}{\big(1-\frac{1+\alpha}{2\sqrt\alpha}\ve\big)\big(1-\alpha e^{\frac{1-\alpha}{\sqrt\alpha}\ve}\big)}\,
\left[\Big(\frac{1-\alpha}{2\sqrt\alpha}\,\ve\Big)\cosh\Big(\frac{1-\alpha}{2\sqrt\alpha}\,\ve\Big)-\sinh\Big(\frac{1-\alpha}{2\sqrt\alpha}\,\ve\Big)\right]>0.
\end{align*}
Thus, the interval $\big(\ve,\frac{1+\alpha}{2\sqrt\alpha}\,\ve\big)$ contains a unique root $\delta$ of the equation $g(\alpha,\delta,\ve)=0.$

Now, assume that $\frac{2\sqrt\alpha}{1+\alpha}\le\ve<\ve_0(\alpha).$ Since $g(\alpha,\delta,\ve)\to\infty$ as $\delta\to1,$ we conclude that there exists a unique root $\delta\in(\ve,1).$
\end{proof}

Let
\eq[ba0]{
B_\delta(x)=\frac{e^{-\delta}}{1-\delta}\,e^{x_1+r(x)}(1-r(x)),\qquad \text{where}~r(x)=\sqrt{\delta^2-x_2+x_1^2}.
}
This family of functions was obtained in~\cite{sv}. As shown there, for every~$\delta\ge0$ $B_\delta$ is a solution of the \ma equation $B_{x_1x_1}B_{x_2x_2}=B_{x_1x_2}^2$ on $\Oe$ satisfying $B_\delta(x_1,x_1^2)=e^{x_1}.$ Furthermore, $B_\delta$ is locally concave on $\Oe.$
In fact, more can be said if $\delta$ is sufficiently large.

For $\ve>0$ and $0<\alpha\le1/2,$ let $\delta(\alpha,\ve)$ be given by Lemma~\ref{LVA}. Define
\eq[ba]{
B_{\alpha,\ve}(x)=B_{\delta(\alpha,\ve)}(x),~x\in\Oe.
}
Since $\ve<\delta(\alpha,\ve)<1,$ we have $B_{\alpha,\ve}\in C^\infty(\Oe).$ 
\begin{lemma}
The function $B_{\alpha,\ve}$ is $\alpha$-concave on $\Oe.$ 
\label{lba}
\end{lemma}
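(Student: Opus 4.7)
The plan is to apply Lemma~\ref{L2} to $B = B_{\alpha,\ve}$ and verify each of its three hypotheses~\eqref{C1}--\eqref{C3}; once this is done, $\alpha$-concavity follows immediately. Condition~\eqref{C1} is already in hand: as noted after~\eqref{ba0}, $B_\delta$ is locally concave on $\Oe$ for every $\delta \geq 0$, so in particular for $\delta = \delta(\alpha,\ve)$.

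For Condition~\eqref{C2}, I would compute the partial derivatives directly from~\eqref{ba0}:
$$
\partial_1 B_\delta = \frac{e^{-\delta}}{1-\delta}\,e^{x_1+r}(1-r-x_1),\qquad \partial_2 B_\delta = \frac{e^{-\delta}}{2(1-\delta)}\,e^{x_1+r}.
$$
On $\Gamma_\ve$ the quantity $r$ is constant, equal to $\rtde$. Writing $\overrightarrow{PQ} = (q-p)(1, p+q)$ and setting $A = 1 - \rtde$, $t = (q-p)/2$, the required inequality $(D_{\overrightarrow{PQ}}B)(P) \geq (D_{\overrightarrow{PQ}}B)(Q)$ reduces, after dividing through by common positive factors, to $A + t \geq e^{2t}(A-t)$. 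When $t \geq A$ this is trivial; otherwise it follows from $\log\frac{A+t}{A-t} = 2\,\mathrm{arctanh}(t/A) \geq 2t/A \geq 2t$, the last step using $A \leq 1$ (since $\rtde \geq 0$). The case $q < p$ is handled by exchanging $P$ and $Q$.

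The core step is Condition~\eqref{C3}. The defining equation $g(\alpha,\delta(\alpha,\ve),\ve)=0$ rewrites as $B_{\alpha,\ve}(p,p^2+\ve^2) = K(\alpha,\ve)\,e^p$, and a short computation gives $S_1 = (p-\alpha q)/(1-\alpha)$ together with $r(S) = \sqrt{\rtde^2 + \alpha(q-p)^2/(1-\alpha)^2}$. Writing $\tau := q - p$ and $\rho := r(S)$, substituting into $B(P) \geq (1-\alpha)B(S) + \alpha B(Q)$ and clearing common positive factors converts the inequality into
$$
(1-\rtde)\,e^{\rtde}\bigl(1 - \alpha\,e^{\tau}\bigr) \geq (1-\alpha)(1-\rho)\,e^{\rho - \alpha\tau/(1-\alpha)}.
$$
Both sides agree trivially at $\tau = 0$, and at the extreme $\tau = \pm\tfrac{1-\alpha}{\sqrt\alpha}\ve$ (where $\rho = \delta$ and $S \in \Gamma_0$) the identity $g=0$ again forces equality; moreover, a Taylor expansion near $\tau = 0$ shows the difference of the two sides is $\tfrac{\alpha\rtde\,e^{\rtde}}{2(1-\alpha)}\,\tau^2 + O(\tau^3)$, nonnegative to leading order.

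The main obstacle is then the intermediate range. My plan is to analyse the one-variable difference $h(\tau)$ of the two sides on $[0,\tau_{\max}]$ with $\tau_{\max}=\tfrac{1-\alpha}{\sqrt\alpha}\ve$, and show directly that $h\ge 0$---for instance by differentiating once and verifying that $h'$ has a single interior zero, so that the function rises from $h(0)=0$ to a unique maximum and descends to $h(\tau_{\max})=0$ without changing sign. The computations will exploit the differential identity $\rho\,\rho'=\alpha\tau/(1-\alpha)^2$ and the fact that $B_\delta$ solves the homogeneous \ma equation with boundary value $B_\delta(x_1,x_1^2)=e^{x_1}$ on $\Gamma_0$. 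With $h\ge 0$ secured, Condition~\eqref{C3} holds and Lemma~\ref{L2} delivers the $\alpha$-concavity of $B_{\alpha,\ve}$.
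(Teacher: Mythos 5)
Your verification of Conditions~\eqref{C1} and~\eqref{C2} is correct and essentially mirrors the paper's (the paper reduces~\eqref{C2} to $\xi^2\cosh\xi - \xi\sinh\xi \ge 0$; your $\mathrm{arctanh}$ argument proves the same inequality). The setup for Condition~\eqref{C3} is also right: your formula for $r(S)$, the reduction to the displayed one-variable inequality in $\tau$, and the observation that $g(\alpha,\delta,\ve)=0$ forces equality at $\tau = \tau_{\max} := \frac{1-\alpha}{\sqrt\alpha}\ve$ all check out.

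However, there is a genuine gap in the treatment of Condition~\eqref{C3}. Your claim that equality also holds at $\tau = -\tau_{\max}$ is false. Although $\rho = r(S)$ depends only on $\tau^2$ and so $\rho = \delta$ at both endpoints, the inequality itself is not even in $\tau$: the left side carries the factor $(1-\alpha\,e^\tau)$ and the right side the factor $e^{-\alpha\tau/(1-\alpha)}$, and the identity $g=0$ only produces equality for the choice of sign giving $\tau = +\tau_{\max}$. (Indeed, one can check numerically at $\alpha = 1/2$ that the two sides disagree at third order in $\ve$ at $\tau = -\tau_{\max}$.) The paper handles the negative range separately and shows the relevant function $G$ (which equals $e^{-\tau}h(\tau)$ in your notation, after the change of variable $\theta = \tau/(1-\alpha)$) is \emph{monotone decreasing} there, so that $G(\theta) \ge G(0) = 0$ strictly for $\theta<0$ --- there is no second boundary zero. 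As written, your plan analyses $h$ only on $[0,\tau_{\max}]$ and says nothing concrete about $\tau<0$, so that half of Condition~\eqref{C3} is simply missing.

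Two smaller but real concerns on $[0,\tau_{\max}]$: the assertion that $h'$ has a single interior zero is stated without proof, and it does not follow from the corresponding property of the paper's $G'$ because $h(\tau) = e^\tau G(\tau/(1-\alpha))$, so $h'=0$ is the condition $G + G'/(1-\alpha)=0$, not $G'=0$. The paper sidesteps this by computing $G'(\theta) = e^{-(1-\alpha)\theta}\bigl[h\bigl(-\alpha\theta + \sqrt{\mu^2+\alpha\theta^2}\bigr)-h(\mu)\bigr]$ with $h(t)=e^t(1-t)$, from which the sign of $G'$ is read off immediately by comparing $\mu$ with $-\alpha\theta+\sqrt{\mu^2+\alpha\theta^2}$; this is where the constraint $\frac{2\mu}{1-\alpha}\le\frac{\ve}{\sqrt\alpha}$ (the right-hand bound in~\eqref{de}) enters. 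You should either reproduce a factorization of this kind or supply an actual proof of the shape of $h$; the plan as stated leaves the hardest step unaddressed.
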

\begin{proof}
Let us write simply $\delta$ for $\delta(\alpha,\ve)$ and $B$ for $B_{\alpha,\ve}.$ We have to verify the three conditions of Lemma~\ref{L2} for 
$B.$ Let $\mu=\sqrt{\delta^2-\ve^2}.$ Observe that $\mu<1$ and that $r(x)=\mu$ for $x\in\Gamma_\ve.$ 

Condition~\eqref{C1}: Direct differentiation (or Lemma~3c of~\cite{sv}) shows that $B$ is locally concave in $\Oe.$

To verify Condition~\eqref{C2}, take any $p$ and $q$ such that $0<|p-q|\le\frac{1-\alpha}{\sqrt\alpha}\,\ve$ and let $P=(p,p^2+\ve^2),$ $Q=(q^2,q^2+\ve^2),$ and $\xi=(q-p)/2.$ We have
$$
\nabla B(x)=\frac{e^{-\delta}}{1-\delta}\,e^{x_1+r(x)}
\begin{bmatrix}
1-r(x)-x_1\\
\frac12
\end{bmatrix},
\qquad
\overrightarrow{PQ}=(q-p)
\begin{bmatrix}
1\\
p+q
\end{bmatrix}.
$$
Therefore, for some non-negative multiple $C,$
\begin{align*}
(D_{\scriptscriptstyle\overrightarrow{PQ}}B)(P)-(D_{\scriptscriptstyle\overrightarrow{PQ}}B)(Q)&=
C(q-p)\left[e^p\Big(1-\mu+\frac{q-p}2\Big)-e^q\Big(1-\mu-\frac{q-p}2\Big)\right]\\
&=4Ce^{(p+q)/2}\left[\xi^2\cosh\xi-(1-\mu)\xi\sinh\xi\right]\\
&\ge4Ce^{(p+q)/2}\left[\xi^2\cosh\xi-\xi\sinh\xi\right]\ge0.
\end{align*}

Lastly, to verify Conidtion~\eqref{C3}, take $P$ and $Q$ as above; let $S=\frac1{1-\alpha}(P-\alpha Q)$ and $\theta=\frac{q-p}{1-\alpha}.$ We have 
$S=\frac1{1-\alpha}(p-\alpha q,p^2-\alpha q^2+(1-\alpha)\ve^2)$, so 
$r(S)=\sqrt{\delta^2-s_2+s_1^2}=\sqrt{\mu^2+\alpha\theta^2}.$ Hence,
\begin{align*}
B(P)&-(1-\alpha)\,B(S)-\alpha\,B(Q)\\
&=
\frac{e^{-\delta}}{1-\delta}\,\left[ e^{p+\mu}(1-\mu)-(1-\alpha)\,e^{\frac{p-\alpha q}{1-\alpha}+r(S)}(1-r(S))-\alpha\,e^{q+\mu}(1-\mu)\right]\\
&=\frac{e^{-\delta}}{1-\delta}\,e^q\,\left[ e^{\mu}(1-\mu)(e^{-(1-\alpha)\theta}-\alpha)-
(1-\alpha)\,e^{-\theta+\sqrt{\mu^2+\alpha\theta^2}}
\left(1-\sqrt{\mu^2+\alpha\theta^2}\right)\right]\\
&=: \frac{e^{-\delta}}{1-\delta}\,e^q\,G(\theta).
\end{align*}
We now need to show that the function $G$ is non-negative on the domain $\big[-\frac{\ve}{\sqrt\alpha},\frac{\ve}{\sqrt\alpha}\big].$ For $t\ge0,$ let $h(t)=e^t(1-t).$ An easy computation gives
$$
G'(\theta)=e^{-(1-\alpha)\theta}\left[h\left(-\alpha\theta+\sqrt{\mu^2+\alpha\theta^2}\right)-h(\mu)\right].
$$
Since $h$ is a decreasing function, if $\theta\le0,$ then $G'(\theta)\le0.$ Since $G(0)=0,$ we conclude that $G(\theta)\ge0$ on $[-\frac{\ve}{\sqrt\alpha},0].$ 

The interval $[0,\frac{\ve}{\sqrt\alpha}]$ requires a little more care. To determine the sign of $G'$ we need to compare the quantities $\mu$ and $-\alpha\theta+\sqrt{\mu^2+\alpha\theta^2}$ (note that the latter is non-negative):
$
-\alpha\theta+\sqrt{\mu^2+\alpha\theta^2}\le \mu \Longleftrightarrow \theta\le \frac{2\mu}{1-\alpha}.
$ 
In addition, $\frac{2\mu}{1-\alpha}=\frac{2\sqrt{\delta^2-\ve^2}}{1-\alpha}\le\frac\ve{\sqrt\alpha}$ (this inequality is equivalent to the right-hand inequality in~\eqref{de}). Therefore, $G'\ge0$ on  $[0,\frac{2\mu}{1-\alpha}]$ and $G'\le0$ on $[\frac{2\mu}{1-\alpha},\frac\ve{\sqrt\alpha}].$ Since $G(0)=0,$ it remains to check that $G(\frac\ve{\sqrt\alpha})\ge0.$ In fact, we have $G(\frac\ve{\sqrt\alpha})=0,$ since this is the (slightly rewritten) equation~$g(\alpha,\delta,\ve)=0.$ This completes the proof.
\end{proof}

Theorem~\ref{ta} now follows easily.
\begin{proof}[Proof of Theorem~\ref{ta}]
Take any $\varphi\in\BMO_\ve(\T)$ and for $N>0$ let $\varphi_N$ be the truncation of $\varphi$ at the $N$-th generation of $\T,$ $\varphi_N=\sum_{J\in\T_N}\avm{\varphi}J\chi^{}_J.$ Note that $\avm{\varphi^{}_N}X=\avm{\varphi}X$ and $\avm{\varphi^2_N}X\le\avm{\varphi^2}X.$

Since $B_{\alpha,\ve}$ is $\alpha$-concave on $\Oe$ and $B_{\alpha,\ve}(x_1,x_1^2)=e^{x_1},$ we can apply Lemma~\ref{L1}:
$$
B_{\alpha,\ve}\big(\avm{\varphi^{}_N}X,\avm{\varphi_N^2}X\big)\ge \frac1{\mu(X)}\,\int_X B_{\alpha,\ve}(\varphi^{}_N,\varphi_N^2)\,d\mu=\frac1{\mu(X)}\,\int_X e^{\varphi^{}_N}\,d\mu.
$$
Since the family $\T$ differentiates $L^1(X,\mu),$ $\varphi_N\to\varphi$ $\mu$-{\it a.e.} as $N\to\infty.$ Thus, $\avm{\varphi^2_N}X\to\avm{\varphi^2}X,$ by Fatou's Lemma. Because $B$ is continuous on $\Oe,$ the left-hand side tends to $B_{\alpha,\ve}(\avm{\varphi}X,\avm{\varphi^2}X).$ Hence, again by Fatou's Lemma, $e^\varphi$ is integrable and
\eq[i1]{
B_{\alpha,\ve}\big(\avm{\varphi}X,\avm{\varphi^2}X\big)\ge \avm{e^\varphi}X.
}
For a fixed $x_1,$ $B$ attains its maximum on $\Gamma_\ve,$ thus the left-hand side is bounded by
$$
e^{\av{\varphi}{\scriptscriptstyle X,\mu}}\,\frac{1-\rtde}{1-\delta}\,e^{-\delta+\rtde}=e^{\av{\varphi}{\scriptscriptstyle X,\mu}}\,K(\alpha,\ve).
$$
This proves~\eqref{22} with $J=X.$ The full result follows, since the set $\mathcal{T}(J)=\{I\in\mathcal{T}: I\subset J\}$ is itself an $\alpha$-tree and $\|\varphi\|_{\BMO(\T(J))}\le \|\varphi\|_{\BMO(\T)}.$ 
\end{proof}

Observe that~\eqref{i1} gives more than the John--Nirenberg inequality~\eqref{22}: with 
$\|\varphi\|_{\BMO(\T)}$ fixed, we get a continuum of more precise inequalities indexed by $(\avm{\varphi}X,\avm{\varphi^2}X).$ In the dyadic case, all these inequalities turn out to be sharp. We can state this compactly using the concept of a Bellman function.

Take $\ve>0$ and a dyadic cube $Q\subset\rn.$ For every $x\in\Oe,$ let
$$
\bel{B}^n_{\ve}(x)=\sup_{\varphi\in \BMO^d_{\ve}(Q)}\left\{\av{e^\varphi}Q:~
\av{\varphi}Q=x_1, \av{\varphi^2}Q=x_2\right\}.
$$
The function $\bel{B}^n_\ve$ is called the Bellman function for the integral John--Nirenberg inequality for $\BMO^d(\rn).$ It is easy to see that the supremum above is taken over a non-empty set for every $x\in\Oe.$ By rescaling, $\bel{B}^n_\ve$ can be seen not to depend on $Q.$

One of the main results of~\cite{sv} was the following theorem.
\begin{theorem}[\cite{sv}]
If $\ve<\sqrt2\log2,$ then
$$
\bel{B}^1_\ve(x)=B_{\frac12,\ve}(x),\quad\forall x\in\Oe.
$$
If $\ve\ge\sqrt2\log2,$ then
$$
\bel{B}_\ve^1(x)=
\begin{cases}
e^{x_1},& x\in\Gamma_0,\\
\infty,&x\in\Oe\setminus\Gamma_0.
\end{cases}
$$
\end{theorem}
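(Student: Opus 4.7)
The plan is to establish the equality $\bel{B}^1_\ve = B_{\frac12,\ve}$ in the subcritical range $\ve < \sqrt 2 \log 2$ by matching upper and lower bounds, then handle the supercritical regime separately. The upper bound is essentially free from the machinery developed in the preceding sections: the collection of dyadic subintervals of $Q$ forms a $\frac12$-tree on $(Q, dx)$, since every dyadic interval in $\mathbb{R}$ splits into two halves of equal length. Lemma~\ref{lba} with $\alpha = \frac12$ therefore makes $B_{\frac12,\ve}$ an $\frac12$-concave majorant of $e^{x_1}$ on $\Oe$, and the Bellman induction of Lemma~\ref{L1} combined with the truncation-plus-Fatou argument already used in the proof of Theorem~\ref{ta} yields
$$
\av{e^\varphi}Q \le B_{\frac12,\ve}\bigl(\av{\varphi}Q,\, \av{\varphi^2}Q\bigr)
$$
for every $\varphi \in \BMO^d_\ve(Q)$; this is exactly the inequality $\bel{B}^1_\ve(x) \le B_{\frac12,\ve}(x)$.

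For the reverse inequality, I would construct, for each $x \in \Oe$, a sequence of dyadic step functions $\varphi_k$ on $Q$ realizing $\av{\varphi_k}Q = x_1$, $\av{\varphi_k^2}Q = x_2$, $\|\varphi_k\|_{\BMO^d} \le \ve$, and $\av{e^{\varphi_k}}Q \to B_{\frac12,\ve}(x)$. The shape of the construction is dictated by the fact that $B_{\frac12,\ve}$ is a solution of the homogeneous \ma equation whose characteristics are the parabolas $\Gamma_\xi$; along each characteristic $B$ is affine, so an optimizer should keep the running pair $\bigl(\av{\varphi_k}J, \av{\varphi_k^2}J\bigr)$ on the characteristic $\Gamma_{r(x)}$ at every parent node $J$ while saturating the $\frac12$-concavity inequality at each split. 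The concrete recipe is to recursively bifurcate a dyadic interval so that one child's average is pushed all the way up to $\Gamma_\ve$, where $\varphi_k$ is frozen to the (locally constant) value read off from the $x_1$-coordinate, while the other child retains an average on $\Gamma_{r(x)}$; after $k$ iterations the innermost cell is assigned a constant returning the average to $\Gamma_0$, which terminates the recursion with a genuine $\T$-simple function.

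The main obstacle is verifying that this iterative construction actually saturates the upper bound in the limit. Two recurrences arise: one for the average point along the characteristic, and one for the contribution to $\av{e^{\varphi_k}}Q$. Both can be solved in closed form because the splittings are dyadic and the characteristics are straight lines in $\Oe$; the first produces a geometric progression whose common ratio is strictly less than one in the subcritical range $\ve < \sqrt 2 \log 2$ (equivalently $\delta(\tfrac12, \ve) < 1$), and the second sums to the formula in~\eqref{ba0} by the uniqueness of the characteristic solution matching the boundary data that the construction imposes. The local BMO constraint must be checked at every node of the filtration, not just at the root, but this is automatic because each local pair of averages lies on $\Gamma_{r(x)}$ with $r(x) \le \delta(\tfrac12, \ve)$, hence inside $\Oe$.

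For $\ve \ge \sqrt 2 \log 2$, the case $x \in \Gamma_0$ is trivial: $x_2 = x_1^2$ forces $\varphi \equiv x_1$ almost everywhere, so $\bel{B}^1_\ve(x) = e^{x_1}$. For $x \in \Oe \setminus \Gamma_0$, I would run the same splitting scheme but observe that the geometric ratio controlling the partial sums $\av{e^{\varphi_k}}Q$ is no longer less than one; equivalently, the threshold $\sqrt 2 \log 2$ is precisely the point at which $\delta(\tfrac12, \ve)$ fails to stay inside $(0,1)$. The partial sums therefore diverge, exhibiting $\bel{B}^1_\ve(x) = \infty$ and completing the proof.
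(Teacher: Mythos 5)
This theorem is quoted from~\cite{sv}; the present paper does not reprove it. Its genuine $n$-dimensional analog, Theorem~\ref{thB}, is proved in Section~\ref{JN}, so I will compare your sketch against that proof.

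Your upper bound (Lemma~\ref{lba} $\Rightarrow$ Lemma~\ref{L1} $\Rightarrow$ Fatou) is correct and matches the paper. The lower bound, however, contains a factual error and an underspecified construction. You assert that the characteristics of the \ma solution $B_\delta$ are the parabolas $\Gamma_\xi$ and that $B_\delta$ is affine along them. Neither is true: on $\Gamma_\xi$ the quantity $r(x)$ is constant, so $B_\delta$ restricted to $\Gamma_\xi$ is $\const\cdot e^{x_1}$, which is exponential in $x_1$, not affine. The actual characteristics (as recalled in the proof of Theorem~\ref{thB}) are the straight lines $\ell_c\colon x_2=2cx_1+\delta^2-c^2$ tangent to $\Gamma_\delta$, and $B_\delta$ is affine only along those segments of $\ell_c$ that lie in $\Oe$. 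Consequently your recipe ``keep the running pair on $\Gamma_{r(x)}$ and saturate the $\tfrac12$-concavity inequality at each split'' does not describe an extremal configuration. There is also a structural obstruction: in the dyadic setting every split must be a bisection, so you cannot freely steer the child averages to chosen curves in $\Oe$; for a generic interior $x$ your iteration is overdetermined. The paper avoids both problems by proving $\bel{B}=B$ only on the two boundary curves --- on $\Gamma_\ve$ via the explicit test function~$\varphi_*$ of~\eqref{i3} (whose Bellman points $(\av{\varphi_*}{Q_k},\av{\varphi_*^2}{Q_k})$ all sit on $\Gamma_1$, with a closed-form geometric series for $\av{e^{\varphi_a}}Q$), and on $\Gamma_0$ via constants --- and then filling in the interior using the affineness of $B$ along the characteristic segments $[U,V]\subset\Oe$ together with the local concavity of $\bel{B}$. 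You would need to replace your direct-construction step with some version of that boundary-plus-foliation argument, and correct the identification of the characteristics, for the lower bound to go through. Your treatment of the supercritical range is in the right spirit, but since it relies on the same unverified iterative construction, it should be replaced by the direct computation that $\av{e^{\varphi_a}}Q$ diverges when $\ve\ge\sqrt2\log2$ and the observation that $\av{\varphi^2}Q=\av{\varphi}Q^2$ forces $\varphi$ to be a.e.\ constant.
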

We now have the following theorem.
\begin{theorem}
\label{thB}
Take $n\ge1.$ If $\ve<\ve_0^d(n),$ then
$$
\bel{B}^n_\ve(x)=B_{2^{-n},\ve}(x),\quad\forall x\in\Oe.
$$
If $\ve\ge\ve_0^d(n),$ then
$$
\bel{B}_\ve^n(x)=
\begin{cases}
e^{x_1},& x\in\Gamma_0,\\
\infty,&x\in\Oe\setminus\Gamma_0.
\end{cases}
$$
\end{theorem}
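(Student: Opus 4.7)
The identity claimed in the theorem splits naturally into an upper bound, a matching lower bound via explicit extremizers in the subcritical range $\ve<\ve_0^d(n)$, and a separate discussion of the degenerate regime $\ve\ge\ve_0^d(n)$.

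First, the upper bound $\mathbf{B}^n_\ve(x)\le B_{2^{-n},\ve}(x)$ comes essentially free from what has already been proved. The dyadic lattice $\mathcal{D}(Q)$ is a $2^{-n}$-tree on $Q$ equipped with Lebesgue measure, so the pointwise estimate \eqref{i1}, established inside the proof of Theorem~\ref{ta}, specializes to
$$
\av{e^\varphi}Q\le B_{2^{-n},\ve}\bigl(\av{\varphi}Q,\av{\varphi^2}Q\bigr)
$$
for every $\varphi\in\BMO_\ve^d(Q)$. Taking the supremum over $\varphi$ with $\av{\varphi}Q=x_1$ and $\av{\varphi^2}Q=x_2$ gives one direction of the theorem.

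For the reverse inequality in the range $\ve<\ve_0^d(n)$, the plan is to construct for each $x\in\Oe$ a sequence $\{\varphi_k\}\subset\BMO_\ve^d(Q)$ with $\av{\varphi_k}Q=x_1$, $\av{\varphi_k^2}Q=x_2$, and $\av{e^{\varphi_k}}Q\to B_{2^{-n},\ve}(x)$. The guiding principle is that $B_{\delta(2^{-n},\ve)}$ solves the homogeneous \ma equation on $\Oe$ and is therefore linear along a family of characteristic chords of $\Gamma_\ve$; correspondingly, the $\alpha$-concavity step used in Lemma~\ref{L1} collapses to an equality precisely when all children of the split lie on a common characteristic. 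I would pick a chord $[P,P']$ of $\Gamma_\ve$ through $(x_1,x_2)$ on which $B_{\delta(2^{-n},\ve)}$ is linear, split $Q$ into its $2^n$ equal dyadic subcubes, and prescribe the pair $(\av{\varphi}{Q_j},\av{\varphi^2}{Q_j})$ so that one subcube is placed at an endpoint of the chord on $\Gamma_\ve$ and the remaining $2^n-1$ have combined average on the same chord. Recursing on each subcube (either continuing along the same chord, or switching to a fresh chord through the current boundary point once $\Gamma_\ve$ has been hit) produces a piecewise-constant $\varphi_k$ on the $k$-th dyadic generation for which every concavity inequality in the Bellman induction is tight. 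Passing $k\to\infty$ and matching the top-of-induction value $B_{2^{-n},\ve}(x)$ then closes the argument.

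The degenerate regime $\ve\ge\ve_0^d(n)$ is handled as follows. If $x\in\Gamma_0$, then $\av{\varphi^2}Q-\av{\varphi}Q^2=0$ forces $\varphi\equiv x_1$ a.e.\ on $Q$, giving $\av{e^\varphi}Q=e^{x_1}$. If $x\in\Oe\setminus\Gamma_0$, a variant of the same characteristic construction, obtained by letting the chord of $\Gamma_\ve$ stretch arbitrarily far (so that the finite-$\delta$ bound $K(2^{-n},\ve)$ no longer caps the exponential averages), produces $\varphi_k$ with $\|\varphi_k\|_{\BMO^d}\le\ve$ and $\av{e^{\varphi_k}}Q\to\infty$, establishing the stated infinite value.

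The main obstacle is the extremal construction itself. One must verify that the prescribed pairs of first and second moments can be realized by dyadic step functions, that the resulting $\BMO^d$ norms stay below $\ve$ throughout the recursion, and that each dyadic split keeps the children's averages inside $\Oe$. The linearity of $B_{\delta(2^{-n},\ve)}$ along the characteristic chords is what turns the Bellman induction into a chain of equalities and lets the exponential average converge to the Bellman value in the limit; checking this convergence under exponentiation, and gluing together the recursive pieces once a characteristic terminates at $\Gamma_\ve$, is the delicate technical step.
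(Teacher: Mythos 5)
Your upper bound is exactly the paper's argument (specialize \eqref{i1} to the dyadic tree), and the degenerate regime is handled in the same spirit, so both of those are fine. The genuine divergence is in the lower bound, and it's worth spelling out why the paper's route is both different and materially simpler than what you sketch.

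You propose to manufacture a near-extremizer for every interior $x\in\Oe$ by a recursive dyadic splitting that stays on a line where $B_{\delta}$ is linear. Two issues. First, the characteristics of the \ma solution $B_\delta$ are the tangents to $\Gamma_\delta$ (the lines $x_2=2cx_1+\delta^2-c^2$), not chords of $\Gamma_\ve$ through $(x_1,x_2)$: for $\ve<\delta$ such a tangent meets $\Gamma_\ve$ at two points, but the segment of it lying inside $\Oe$ runs from $\Gamma_0$ up to the nearest intersection with $\Gamma_\ve$, not between the two $\Gamma_\ve$ intersections. Second, even granting the right chords, your recursion would require you to check at every step that the $2^n$ children's averages stay in $\Oe$, that the BMO norm is preserved, and that the concavity step is tight — exactly the verifications you flag as ``the delicate technical step'' but do not carry out. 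That is a real gap: the proposal as written does not produce a function.

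The paper sidesteps all of this. It writes down the explicit step function $\varphi_*$ of \eqref{i3} (supported on the nested cubes $Q_k=(0,2^{-k})^n$), verifies by direct series computation that $\|\varphi_*\|_{\BMO^d}=1$ and that $\varphi_a=\ve\varphi_*+a$ realizes the point $P_a=(a,a^2+\ve^2)\in\Gamma_\ve$ with $\av{e^{\varphi_a}}Q=B(P_a)$ (the series converges exactly when $\ve<\ve_0^d(n)$, which is also where the sharpness of $\ve_0^d(n)$ comes from). Constants take care of $\Gamma_0$. Then, instead of constructing interior extremizers, the paper uses that $\bel{B}$ is automatically locally concave on $\Oe$ (straight from its definition, as in \cite{sv}) together with the linearity of $B$ along each tangent segment $[U,V]$, $U\in\Gamma_0$, $V\in\Gamma_\ve$, to push the boundary equalities into the interior: $\bel{B}(x)\ge(1-\gamma)\bel B(U)+\gamma\bel B(V)=B(x)$. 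This eliminates the recursive construction entirely, and it is the piece of the argument your proposal is missing.

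If you want to repair your version, the cleanest fix is to adopt the paper's structure: prove $\bel{B}=B$ on $\Gamma_0$ and $\Gamma_\ve$ with an explicit optimizer (your chord heuristic correctly predicts that the optimizer should live on $\Gamma_\ve$ after each split, which is precisely what $\varphi_*$ does), and then invoke local concavity of $\bel{B}$ along the characteristic foliation rather than trying to build interior optimizers directly.
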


A key role in the proof of this theorem is played by the following function from $\BMO^d((0,1)^n).$ Let $Q=(0,1)^n$ and $Q_k=(0,2^{-k})^n$ for all $k\ge0.$ Now, let
\eq[i3]{
\varphi_*(t)=2^{-n/2}\,\big(k(2^n-1)-1\big),\quad t\in Q_k\setminus Q_{k+1}.
}
\begin{lemma}
We have $\av{\varphi_*}Q=0,$ $\av{\varphi_*^2}Q=1,$ and $\varphi_*\in\BMO^d(Q)$ with 
$\|\varphi_*\|_{\BMO^d(Q)}=1.$
\end{lemma}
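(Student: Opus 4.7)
The plan is to verify the three assertions by direct computation, using the geometric-series structure of $\varphi_*$ on the annuli $Q_k\setminus Q_{k+1}$ together with the self-similarity of the nested family $\{Q_k\}$. The starting observation is that $|Q_k\setminus Q_{k+1}|=2^{-kn}(1-2^{-n})=(2^n-1)2^{-(k+1)n}$. Writing $q:=2^{-n}$, so that $2^n-1=(1-q)/q$, the averages over $Q$ become geometric sums. For the first moment,
$$
\av{\varphi_*}Q=2^{-n/2}(1-q)\sum_{k=0}^\infty q^k\big(k(2^n-1)-1\big),
$$
and the standard identities $\sum q^k=1/(1-q)$, $\sum kq^k=q/(1-q)^2$ collapse this to $2^{-n/2}(2^nq-1)/(1-q)=0$. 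The same method applied to $\varphi_*^2$ uses $\sum k^2q^k=q(1+q)/(1-q)^3$, and after substituting $(2^n-1)=(1-q)/q$ the three resulting sums combine to give exactly $1$; hence $\av{\varphi_*^2}Q=1$, which also yields $\Delta_{2,Q}(\varphi_*)=1$ and therefore $\|\varphi_*\|_{\BMO^d(Q)}\ge 1$.

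The reverse inequality for the BMO norm is the structural point. I would first classify the dyadic subcubes $J\subset Q$: since $Q_{k+1}$ is itself a dyadic subcube of $Q_k$, any dyadic $J\subset Q$ is either one of the $Q_k$ in the chain, or is a dyadic subcube of some $Q_k$ disjoint from $Q_{k+1}$ and therefore contained in a single annulus $Q_k\setminus Q_{k+1}$, on which $\varphi_*$ is constant and $\Delta_{2,J}(\varphi_*)=0$. Hence the supremum defining $\|\varphi_*\|_{\BMO^d(Q)}$ is realized among the cubes $Q_k$. For these, I would invoke self-similarity: on $Q_k$, setting $\psi:=\varphi_*|_{Q_k}-2^{-n/2}k(2^n-1)$, the definition of $\varphi_*$ yields $\psi(t)=2^{-n/2}(j(2^n-1)-1)$ for $t\in Q_{k+j}\setminus Q_{k+j+1}$, which is precisely the pullback of $\varphi_*$ under the affine map rescaling $Q_k$ onto $Q$. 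Therefore $\Delta_{2,Q_k}(\varphi_*)=\Delta_{2,Q_k}(\psi)=\Delta_{2,Q}(\varphi_*)=1$ for every $k\ge0$, and $\|\varphi_*\|_{\BMO^d(Q)}=1$ follows.

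The only step demanding genuine care is the algebraic simplification of the three moment sums; the substitution $(2^n-1)=(1-q)/q$ is the key trick that makes the cancellations transparent. Everything else---the classification of dyadic subcubes and the self-similarity---is immediate from the construction of $\varphi_*$, so I anticipate no serious obstacle.
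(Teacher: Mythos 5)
Your proof is correct, and it follows the same basic strategy as the paper---reduce the BMO computation to the chain of cubes $Q_k$ and then compute averages over the annuli $Q_k\setminus Q_{k+1}$---but with one genuine streamlining. The paper simply asserts that it suffices to check oscillations over the $Q_k$ and then directly evaluates $\av{\varphi_*}{Q_k}=2^{-n/2}(2^n-1)k$ and $\av{\varphi_*^2}{Q_k}=1+2^{-n}(2^n-1)^2k^2$ for every $k$, from which $\Delta_{2,Q_k}(\varphi_*)=1$ is read off. You instead compute only the $k=0$ case and handle general $k$ by the self-similarity observation that $\varphi_*|_{Q_k}-2^{-n/2}k(2^n-1)$ is the pullback of $\varphi_*$ under the dyadic rescaling $t\mapsto 2^k t$ from $Q_k$ onto $Q$, which preserves oscillations. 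This is cleaner and makes the invariance under the scaling transparent; the paper's direct computation has the minor advantage of exhibiting the exact Bellman points $(\av{\varphi_*}{Q_k},\av{\varphi_*^2}{Q_k})$ all lying on $\Gamma_1$, which foreshadows why $\varphi_*$ is an optimizer. You also spell out the reduction to the $Q_k$ (any dyadic $J\ne Q_k$ is either contained in a single annulus, on which $\varphi_*$ is constant, or else would have to be a $Q_m$ by the nested-dyadic-cube dichotomy), which the paper leaves as ``clear''---a small but legitimate improvement in completeness.
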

\begin{proof}
It is clear that to verify that $\varphi_*\in\BMO^d(Q),$ and to compute the norm, it suffices to check the average oscillations of $\varphi_*$ only over the cubes $Q_k.$ We have
\begin{align*}
\av{\varphi_*}{Q_k}&=2^{n(k-1/2)}\sum_{j=k}^\infty \big(2^{-nj}-2^{-n(j+1)}\big)\big(j(2^n-1)-1\big)\\
&=2^{-n/2}(2^n-1)k
\end{align*}
and
\begin{align*}
\av{\varphi_*^2}{Q_k}&=2^{n(k-2)}(2^n-1)\sum_{j=k}^\infty 2^{-nj}\big(j(2^n-1)-1\big)^2\\
&=1+2^{-n}(2^n-1)^2k^2.
\end{align*}
Setting $k=0,$ we obtain $\av{\varphi_*}Q=0$ and $\av{\varphi_*^2}Q=1.$ Furthermore, for any $k,$ 
$\av{\varphi_*^2}{Q_k}-\av{\varphi_*}{Q_k}^2=1,$ which means that $\varphi_*\in\BMO^d(Q)$ and $\|\varphi_*\|_{\BMO^d(Q)}=1,$ as claimed.
\end{proof}

\begin{proof}[Proof of Theorem~\ref{thB}]
Take any $\ve\in(0,\ve_0^d(n))$ and let us write $\bel{B}$ for $\bel{B}^n_\ve,$ $B$ for $B_{2^{-n},\ve},$ and $\delta$ for $\delta(2^{-n},\ve).$ Fix a point $x\in\Oe$ and pick any $\varphi\in\BMO^d(Q)$ such that $(\av{\varphi}Q,\av{\varphi^2}Q)=x.$ By~\eqref{i1}, 
$$
B(x)\ge\av{e^\varphi}Q.
$$
Taking the supremum over all such $\varphi,$ we conclude that $B(x)\ge\bel{B}(x).$

To show the converse, we make use of the function $\varphi_*$ given by~\eqref{i3}. Take any $a\in\mathbb{R}$ and consider the corresponding point on $\Gamma_\ve,$ $P_a=(a,a^2+\ve^2).$ Let $\varphi_a=\ve\varphi_*+a.$ Then $(\av{\varphi_a}Q,\av{\varphi_a^2}Q)=P_a$ and $\|\varphi_a\|_{\BMO^d(Q)}=\ve.$ Furthermore,
\begin{align*}
\av{e^{\varphi_a}}{Q}&=e^{a-\ve 2^{n/2}}(1-2^{-n})\sum_{j=0}^\infty 2^{-nj} e^{\ve 2^{-n/2}(2^n-1)j}\\
&=e^{-\ve 2^{n/2}}(1-2^{-n})\sum_{j=0}^\infty \Big(2^{-n} e^{\ve 2^{-n/2}(2^n-1)}\Big)^j.
\end{align*}
This sum converges if and only if $\ve<\frac{2^{n/2}}{2^n-1}\,n\log 2=\ve_0^d(n),$ in which case
$$
\av{e^{\varphi_a}}{Q}=e^{a-\ve 2^{n/2}}(1-2^{-n})\frac1{1-2^{-n} e^{\ve 2^{-n/2}(2^n-1)}}=e^a\,K(2^{-n},\ve)=B(P_a).
$$
Therefore, $\bel{B}(P_a)\ge B(P_a).$

Thus, $\bel{B}=B$ on $\Gamma_\ve.$ Similarly, by considering the constant function $\psi_u=u$ corresponding to a point $(u,u^2)\in\Gamma_0,$ we conclude that $\bel{B}=B$ on $\Gamma_0.$ Now, as shown in~\cite{sv}, for any number $c,$ the function $B$ is linear along the line $\ell_c$ with the equation $x_2=2c x_1+\delta^2-c^2;$ such lines are tangent to $\Gamma_\delta$ and they foliate $\Oe.$ On the other hand, it is easy to show that $\bel{B}$ is locally concave in $\Oe$ (this is done is~\cite{sv} for $n=1$). 

Take any point $x\in\Oe$ and let $\ell$ be the unique tangent to $\Gamma_\delta$ passing through $x.$ Then $\ell$ intersects $\Gamma_0$ and $\Gamma_\ve$ at some points $U$ and $V,$ respectively, such that the segment $[U,V]$ lies entirely in $\Oe.$ We can write $x$ as a convex combination of $U$ and $V:$ 
$x=(1-\gamma)U+\gamma V$ for some $\gamma\in[0,1].$ Then,
$$
\bel{B}(x)\ge (1-\gamma)\,\bel{B}(U)+\gamma\,\bel{B}(V)=(1-\gamma)\,B(U)+\gamma\,B(V)=B(x).
$$
Thus $\bel{B}\ge B$ everywhere in $\Oe.$

Now take $\ve\ge\ve_0^d(n).$ If $x\in\Gamma_0,$ then $\bel{B}(x)=e^{x_1}.$ Indeed, if $\av{\varphi^2}Q=\av{\varphi}Q^2,$ then $\varphi$ is {\it a.e.} constant on $Q,$ and so $\av{e^\varphi}Q=e^{\av{\varphi}{\scriptscriptstyle Q}}.$ On the other hand, $\av{e^{\varphi_a}}{Q}=\infty,$ which means that $\bel{B}=\infty$ on $\Gamma_\ve.$ It is now an easy exercise to show that $\bel{B}=\infty$ everywhere in~$\Oe\setminus\Gamma_0.$
\end{proof}

Theorem~\ref{t1} now follows.

\begin{proof}[Proof of Theorem~\ref{t1}]
Note that this theorem is stated for $\BMO^d(\rn),$ while both Theorem~\ref{ta} and Theorem~\ref{thB} deal with $\BMO$ on sets of finite measure. However, the difference is inconsequential. Indeed, take any $\varphi\in\BMO^d(\rn)$ and let $\ve=\|\varphi\|_{\BMO^d(\rn)}.$ Assume that $\ve<\ve_0^d(n).$ Take any $Q\in\mathcal{D}.$ Then $\|\varphi\|_{\BMO^d(Q)}\le\ve,$ thus by Theorem~\ref{ta}, \eqref{i2} holds.

To prove sharpness, consider again the function $\varphi_*$ from~\eqref{i3} and extend it periodically to all of $\rn,$ by replicating it on every dyadic cube of measure 1. As before, for any $a\in\mathbb{R},$ let $\varphi_a=\ve\varphi_*+a.$ Then $\|\varphi_a\|_{\BMO^d(\rn)}=\ve\|\varphi_*\|_{\BMO^d((0,1)^n)}=\ve.$ Moreover, if $\ve<\ve_0^d(n),$ then inequality~\eqref{i2} becomes equality for  $Q=(0,1)^n$ and $\varphi=\varphi_a.$ On the other hand, if $\ve\ge\ve_0^d(n),$ we have $\av{e^{\varphi_a}}Q=\infty.$ 
\end{proof}

\section{$1$-oscillations vs. $2$-oscillations and equivalence of BMO norms}
\label{1-2}

In this section, we illustrate how one can obtain {\it lower} bounds for integral functionals on BMO on trees using $\alpha$-convex functions. A representative example is supplied by the task of sharply estimating the $p$-oscillation of a BMO function, $\Delta_{p,\mu,J}(\varphi),$ for $p<2,$ in terms of its $2$-oscillation $\Delta_{2,\mu,J}(\varphi)$ and the BMO norm. In the case of the continuous BMO on an interval, this problem was solved in~\cite{sv1}, where we built a family of locally convex functions on $\Oe$ that yielded sharp estimates of $L^p$-norms and, consequently, of $p$-oscillations. Here, we consider the simplest non-trivial case, $p=1,$ and construct the largest $\alpha$-convex analog of the corresponding function from~\cite{sv1}. This allows us to prove Theorems~\ref{t6} and~\ref{thb}. As in Section~\ref{JN}, there is an additional result of interest: Theorem~\ref{belb} gives the exact Bellman function for the $L^1$-norm of a $\BMO^d$ function in dimension $n.$

Take an $\ve>0$ and $\alpha\in(0,1/2]$ and write $\Oe=\Omega_0\cup\Omega_1,$ where 
\begin{align*}
\Omega_0&=\Big\{x:~\textstyle{\frac{(1+\alpha)\ve}{\sqrt\alpha}}\,|x_1|\le x_2\le x_1^2+\ve^2\Big\},\\
\Omega_1&=\Big\{x:~|x_1|\le\sqrt\alpha\ve,~x_1^2\le x_2\le \textstyle{\frac{(1+\alpha)\ve}{\sqrt\alpha}}\,|x_1|\Big\}\cup\Big\{x:~|x_1|>\sqrt\alpha\ve,~x_1^2\le x_2\le x_1^2+\ve^2\Big\}.
\end{align*}
Let
\eq[b]{
b_{\alpha,\ve}(x)=
\begin{cases}
\frac{\sqrt\alpha}{(1+\alpha)\ve}\,x_2, & x\in\Omega_0,\\
|x_1|, & x\in\Omega_1.
\end{cases}
}
We will show that $b_{\alpha,\ve}$ is $\alpha$-convex in $\Oe,$ which, in conjunction with Lemma~\ref{L1}, will yield the estimates of Theorem~\ref{thb}. We need one more definition: let
$$
\Omega_2=\Omega_0\cup\Big\{x\in\Omega_1:~x_2\le\textstyle{\frac{(1+\alpha)\ve}{\sqrt\alpha}}\,|x_1|\Big\}.
$$
Observe that if $x\in\Omega_2,$ then $b_{\alpha,\ve}(x)=\max\big\{|x_1|,\frac{\sqrt\alpha}{(1+\alpha)\ve}\,x_2\big\};$ if $x\in\Oe\setminus \Omega_2,$ then $|x_1|>\ve/\sqrt\alpha$ and $x_2> (1+\alpha)\ve^2/\alpha.$
\begin{lemma}
\label{L10}
The function $b_{\alpha,\ve}$ is $\alpha$-convex in $\Oe.$
\end{lemma}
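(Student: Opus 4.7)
The plan is to apply the convex version of Lemma~\ref{L2} to $b=b_{\alpha,\ve}$, verifying its three conditions in turn. The key structural observation is that on $\Omega_2$ the function $b$ coincides with the pointwise maximum $M(x):=\max\!\bigl\{|x_1|,\tfrac{\sqrt\alpha}{(1+\alpha)\ve}x_2\bigr\}$ of two convex functions, while on $\Oe\setminus\Omega_2$ — where $M>|x_1|$ — the function $b$ drops to $|x_1|$. In particular, the transition curve between the two regions lies in the outer strip $|x_1|>\ve/\sqrt\alpha$, and on the $\Omega_2$ side of this curve $b=|x_1|$ already, so the passage into $\Oe\setminus\Omega_2$ is seamless.

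To verify Condition~\eqref{C1}, I would partition any convex subset $K\subset\Oe$ according to the pieces: the linear piece $\tfrac{\sqrt\alpha}{(1+\alpha)\ve}x_2$ on $\Omega_0$, and the convex piece $|x_1|$ on $\Oe\setminus\Omega_0$. The only genuinely two-piece interface inside $\Oe$ is the V-line $x_2=\tfrac{(1+\alpha)\ve}{\sqrt\alpha}|x_1|$ within the central lens $|x_1|\le\sqrt\alpha\ve$, across which $b$ transitions from $|x_1|$ (below) to $\tfrac{\sqrt\alpha}{(1+\alpha)\ve}x_2$ (above). A direct gradient-jump computation shows the jump points into the upper half-plane of the V-line, yielding convexity across this interface. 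Elsewhere, $b$ reduces to one of two convex formulas on each convex slice of $\Oe$.

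For Conditions~\eqref{C2} and~\eqref{C3}, take $P=(p,p^2+\ve^2)$ and $Q=(q,q^2+\ve^2)$ on $\Gamma_\ve$ with $|p-q|\le\tfrac{1-\alpha}{\sqrt\alpha}\ve$, and let $S=\tfrac1{1-\alpha}(P-\alpha Q)$. The value of $b$ on $\Gamma_\ve$ splits by whether $|p|\le\sqrt\alpha\ve$ (giving $b(P)=\tfrac{\sqrt\alpha}{(1+\alpha)\ve}(p^2+\ve^2)$) or $|p|>\sqrt\alpha\ve$ (giving $b(P)=|p|$), and analogously for $Q$. I would decompose into the resulting cases (reduced by the $x_1\leftrightarrow -x_1$ symmetry), and in each case verify the directional-derivative monotonicity of Condition~\eqref{C2} and the three-point inequality of Condition~\eqref{C3} by direct substitution of the explicit piecewise formulas. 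The constraint $|p-q|\le\tfrac{1-\alpha}{\sqrt\alpha}\ve$ is precisely what ensures $S$ lands in the region where the appropriate formula for $b(S)$ makes the three-point inequality valid, with equality in the borderline configuration.

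The main obstacle is combinatorial rather than computational: one must track which of the three regimes $|x_1|\le\sqrt\alpha\ve$, $\sqrt\alpha\ve<|x_1|\le\ve/\sqrt\alpha$, $|x_1|>\ve/\sqrt\alpha$ contains each of $P,Q$, and then determine which piece of $b$ governs $S$. Once the geometry in each case is pinned down, the inequalities reduce to elementary algebra (often just comparisons of slopes or convex-combination identities), but the casework and correct accounting of the threshold values is where the bulk of the work lies.
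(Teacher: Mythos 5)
Your structural observation that $b_{\alpha,\ve}=\max\bigl\{|x_1|,\tfrac{\sqrt\alpha}{(1+\alpha)\ve}x_2\bigr\}$ on $\Omega_2$ is exactly the right key fact, and your reading of the geometry (the ``seamless'' transition into $\Oe\setminus\Omega_2$ along $|x_1|$) is also correct. But routing the argument through Lemma~\ref{L2} is both a detour from the paper's proof and, as stated, not quite admissible: Condition~\eqref{C2} of Lemma~\ref{L2} requires $b$ to have non-tangential derivatives at \emph{every} point of $\Gamma_\ve$, whereas $b_{\alpha,\ve}$ has genuine corners on $\Gamma_\ve$ at $(\pm\sqrt\alpha\,\ve,\,(1+\alpha)\ve^2)$, where the active piece of the max switches. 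To make Lemma~\ref{L2} apply you would either have to extend it to one-sided derivatives of piecewise-affine functions or separately patch the corner cases; you flag ``casework'' as the bulk of the work, but this particular obstruction is not mere bookkeeping, it is a hypothesis of the lemma you want to invoke. The paper explicitly notes that because $b$ is piecewise linear one should bypass Lemma~\ref{L2} and verify \eqref{602} directly, and that direct proof is very short: take $\beta\in[\alpha,\tfrac12]$, $P,Q,S\in\Oe$ with $P=(1-\beta)S+\beta Q$. If $P\in\Omega_1$ then $b(P)=|p_1|$, and since $b\ge|x_1|$ on all of $\Oe$ and $|x_1|$ is globally convex, the inequality is immediate. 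If $P\in\Omega_0$ then $|p_1|\le\sqrt\alpha\,\ve$ and $p_2\le(1+\alpha)\ve^2$; using $1-\beta\ge\tfrac12\ge\alpha$ and $\beta\ge\alpha$ one shows $s_2\le\tfrac{1+\alpha}{\alpha}\ve^2$ and $q_2\le\tfrac{1+\alpha}{\alpha}\ve^2$, so $S,Q\in\Omega_2$, where $b\ge\tfrac{\sqrt\alpha}{(1+\alpha)\ve}x_2$, and the affine formula for $b(P)$ yields equality in the convex combination. This two-case argument uses exactly your $\max$ observation, but applied as a lower bound at $S$ and $Q$ with equality at $P$ — no derivative comparisons, no smoothness, and essentially no case explosion. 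I would encourage you to abandon the Lemma~\ref{L2} route here: it buys nothing and introduces a hypothesis your function does not satisfy.
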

\begin{proof}
Write $b$ for $b_{\alpha,\ve}.$ Because $b$ is piecewise linear, we can easily verify part~\eqref{602} of Definition~\ref{def} directly, without using Lemma~\ref{L2}.

Take any $\beta\in[\alpha,1/2]$ and any three points $P,Q,S\in\Oe$ such that $P=(1-\beta)S+\beta Q.$ If $P\in\Omega_1,$ then
$$
b(P)-(1-\beta)\,b(S)-\beta\,b(Q)\le |p_1|-(1-\beta)\,|s_1|-\beta\,|q_1|\le0,
$$ 
because $b(x)\ge|x_1|$ in $\Oe$ and $|x_1|$ is a convex function on the plane. 

If $P\in\Omega_0,$ then $|p_1|\le\sqrt\alpha\,\ve$ and $p_2\le(1+\alpha)\ve^2.$ Thus, 
$$
s_2=\frac{p_2-\beta q_2}{1-\beta}\le \frac{1+\alpha}{1-\beta}\,\ve^2\le \frac{1+\alpha}{\alpha}\,\ve^2,
$$
since $1-\beta\ge1/2\ge\alpha.$ Therefore, $S\in\Omega_2.$ On the other hand,
$$
(1-\beta)s_2=p_2-\beta q_2\le p_2-\alpha q_2\le(1+\alpha)\ve^2-\alpha q_2.
$$
Thus, $q_2\le \frac{(1+\alpha)\ve^2}\alpha,$ which means that $Q\in\Omega_2.$ Finally, since 
$b(x)\ge \frac{\sqrt\alpha}{(1+\alpha)\ve}\,x_2$ in $\Omega_2,$ we have
$$
b(P)-(1-\beta)\,b(S)-\beta\,b(Q)\le\textstyle{\frac{\sqrt\alpha}{(1+\alpha)\ve}}\,(p_2-(1-\beta)\,s_2-\beta\,q_2)=0.
$$
This completes the proof.
\end{proof}

We can now prove Theorem~\ref{thb}.
\begin{proof}[Proof of Theorem~\ref{thb}]
As in the proof of Theorem~\ref{ta}, take any $\varphi\in\BMO_\ve(\T)$ and for $N>0$ let $\varphi_N=\sum_{J\in\T_N}\avm{\varphi}J\chi^{}_J.$ Since $b_{\alpha,\ve}$ is $\alpha$-convex in $\Oe,$ and $b_{\alpha,\ve}(x)=|x_1|$ on $\Gamma_0,$ Lemma~\ref{L1} gives
$$
b_{\alpha,\ve}\big(\avm{\varphi^{}_N}X,\avm{\varphi_N^2}X\big)\le \frac1{\mu(X)}\,\int_X b_{\alpha,\ve}(\varphi^{}_N,\varphi_N^2)\,d\mu=\frac1{\mu(X)}\,\int_X |\varphi^{}_N|\,d\mu.
$$
Note that $|\varphi_N|\le |\varphi|_N,$ thus the right-hand side does not exceed $\avm{|\varphi|^{}_N}X=\avm{|\varphi|}X.$ Moreover, as in Lemma~\ref{L1}, the left-hand side converges to $b_{\alpha,\ve}\big(\avm{\varphi}X,\avm{\varphi^2}X\big)$ as $N\to\infty.$ Thus,
\eq[ov]{
b_{\alpha,\ve}\big(\avm{\varphi}X,\avm{\varphi^2}X\big)\le\avm{|\varphi|}X.
}
Recall definition~\eqref{b} of $b_{\alpha,\ve}.$ Replacing $\varphi$ with $\varphi-\avm{\varphi}X$ (which does not affect the $\BMO$ norm of $\varphi$), we obtain
$$
b_{\alpha,\ve}\big(0,\avm{\varphi^2}X-\avm{\varphi}X^2\big)=\frac{\sqrt\alpha}{(1+\alpha)\ve}\,\Delta_{2,\mu,X}(\varphi)\le\Delta_{1,\mu,X}(\varphi),
$$
which proves~\eqref{mest11} with $J=X.$ As before, by considering the tree $\mathcal{T}(J)$ in place of $\mathcal{T}$ we get the result for an arbitrary $J\in\mathcal{T}.$
Thus,
\eq[11]{
\frac{\sqrt\alpha}{(1+\alpha)\ve}\,\Delta_{2,\mu,J}(\varphi)\le\Delta_{1,\mu,J}(\varphi)\le \|\varphi\|_{\BMO^1(\T)}.
}
To prove~\eqref{mest22}, take any $\varphi\in\BMO(\T)$ and let $\ve=\|\varphi\|_{\BMO(\T)}.$ If $\ve=0,$~\eqref{mest22} holds automatically; if $\ve>0,$ we have a sequence $\{J_k\}$ of elements of $\T$ such that 
$\Delta_{2,\mu,J_k}(\varphi)\to\ve^2$ as $k\to\infty.$ Replacing $J$ with $J_k$ in~\eqref{11} and taking the limit completes the proof.
\end{proof}

In the dyadic case, we again state the corresponding result using a Bellman function. In contrast with the John--Nirenberg inequality, we are proving a sharp {\it lower} estimate, thus our Bellman function involves infimum instead of supremum. 

Take a dyadic cube $Q\subset\rn$ and for every $x\in\Oe,$ let
$$
\bel{b}^n_{\ve}(x)=\inf_{\varphi\in \BMO^d_{\ve}(Q)}\left\{\av{|\varphi|}Q:~
\av{\varphi}I=x_1, \av{\varphi^2}I=x_2\right\}.
$$
\begin{theorem}
\label{belb}
For any $\ve>0,$
$$
\bel{b}^n_\ve=b_{2^{-n},\ve}.
$$
\end{theorem}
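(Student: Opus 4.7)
The plan is to establish the two inequalities $\bel{b}^n_\ve \le b_{2^{-n},\ve}$ and $\bel{b}^n_\ve \ge b_{2^{-n},\ve}$ separately, with the first being essentially free and the second requiring an explicit construction of optimizers.

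The lower bound is a direct reuse of the argument already given for Theorem~\ref{thb}. Setting $\alpha = 2^{-n}$ and taking $\T = \mathcal{D}(Q)$ (so that $\T$ is a $2^{-n}$-tree), Lemma~\ref{L10} asserts that $b_{2^{-n},\ve}$ is $\alpha$-convex on $\Oe$, so Lemma~\ref{L1} applies. Exactly as in the derivation of \eqref{ov}, for any $\varphi \in \BMO^d_\ve(Q)$ with $(\av{\varphi}Q, \av{\varphi^2}Q) = x$ one obtains $b_{2^{-n},\ve}(x) \le \av{|\varphi|}Q$. Taking the infimum over all such $\varphi$ yields $b_{2^{-n},\ve}(x) \le \bel{b}^n_\ve(x)$.

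For the reverse inequality I would produce, for each $x \in \Oe$, an admissible $\varphi$ (or an approximating sequence) with $\av{|\varphi|}Q \le b_{2^{-n},\ve}(x)$. The structure of the extremals is dictated by the piecewise-affine geometry of $b_{2^{-n},\ve}$: on $\Omega_0$ it is affine in $(x_1,x_2)$, and on each constant-sign component of $\Omega_1$ it is affine in $x_1$. Consequently, equality in the $\alpha$-convexity inequality for $b_{2^{-n},\ve}$ is achieved whenever all Bellman points produced by the induction remain in a common affine piece. This principle guides the construction: split each dyadic cube so that the average-points of its $2^n$ children lie on a common affine piece of $b_{2^{-n},\ve}$, and recurse. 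Three regimes arise. On $\Gamma_0$ the constant function $\varphi \equiv x_1$ is the only admissible choice and immediately gives $\av{|\varphi|}Q = |x_1| = b_{2^{-n},\ve}(x_1, x_1^2)$. On $\Gamma_\ve$ I would build an $L^1$-analog of the recursive function $\varphi_\ast$ from \eqref{i3}: at each step the cube is partitioned into its $2^n$ equal dyadic children, $2^n - 1$ of which are assigned constant values (their Bellman points landing on $\Gamma_0$) while the remaining child carries a rescaled, translated copy of the original problem (its Bellman point remaining on $\Gamma_\ve$). The constants are determined by the linear system enforcing the prescribed averages on $Q$ together with oscillation exactly $\ve$ on every subcube. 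For interior $x\in\Oe$, one writes $x = (1-\gamma)U + \gamma V$ with $U \in \Gamma_0$ and $V \in \Gamma_\ve$ chosen on a common affine line of $b_{2^{-n},\ve}$, and concatenates the two boundary optimizers on complementary collections of dyadic subcubes of $Q$ with total measures $1-\gamma$ and $\gamma$.

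The main technical obstacle is the step on $\Gamma_\ve$: verifying that the iterated self-similar construction produces a function in $\BMO^d_\ve(Q)$ (the variance on every dyadic subcube must be at most $\ve^2$) and that the resulting geometric series for $\av{|\varphi|}Q$ sums precisely to $b_{2^{-n},\ve}(x)$. Unlike in Theorem~\ref{thB}, no threshold on $\ve$ is needed since the relevant series converges for every $\ve > 0$; however, two sub-constructions are required depending on whether the starting point lies in $\Omega_0$ (i.e., $|x_1|\le\sqrt{2^{-n}}\,\ve$) or in $\Omega_1$, and one must check that they match across the seam $x_2 = \frac{(1+2^{-n})\ve}{\sqrt{2^{-n}}}\,|x_1|$ where the formula for $b_{2^{-n},\ve}$ changes.
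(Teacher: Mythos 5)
Your lower bound is exactly the paper's argument, and you correctly notice that $\Gamma_0$ and $\Gamma_\ve$ are the places to look for extremizers. But for the upper bound you diverge from the paper in a way that creates a real gap.

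The paper does \emph{not} construct an optimizer on all of $\Gamma_\ve$. It constructs explicit optimizers only on $\Gamma_0$ (constants) and on $\Gamma_\ve\cap\Omega_1$, i.e.\ for $|a|\ge 2^{-n/2}\ve$. The trick there is not a series computation at all: for such $a$ the function $\varphi_a=\ve\varphi_*+a$ (the same $\varphi_*$ from \eqref{i3}, no new ``$L^1$-analog'' needed) is {\it a.e.\ of one sign}, so $\av{|\varphi_a|}Q=|\av{\varphi_a}Q|=|a|=b_{2^{-n},\ve}(a,a^2+\ve^2)$, and this matches the trivial lower bound $\av{|\psi|}Q\ge|\av{\psi}Q|$ which holds for any admissible $\psi$. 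The rest of $\Oe$ is handled without any further optimizers: $\bel{b}^n_\ve$ is locally convex directly from its definition, $b_{2^{-n},\ve}$ is linear along $\Gamma_0$-to-$\Gamma_\ve$ segments lying in $\Omega_1$ and affine on $\Omega_0$, and one interpolates from the boundary values where $\bel{b}=b$ has already been established (in $\Omega_0$, via the three corner points $(0,0)$ and $(\pm 2^{-n/2}\ve,(1+2^{-n})\ve^2)$).

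Your plan to build an explicit extremizer on $\Gamma_\ve\cap\Omega_0$ by the same self-similar recursion is where things break: for $|a|<2^{-n/2}\ve$ the function $\varphi_a$ changes sign, and a direct computation gives
$$\av{\ve|\varphi_*|}Q=2^{1-n/2}\,(1-2^{-n})\,\ve>\frac{2^{n/2}}{2^n+1}\,\ve=b_{2^{-n},\ve}(0,\ve^2),$$
so the $\varphi_*$-type construction is strictly suboptimal there; in fact the infimum on $\Gamma_\ve\cap\Omega_0$ is attained only along a minimizing sequence, so no single admissible $\varphi$ will do. You hedge by allowing ``an approximating sequence,'' but you would then need to exhibit such a sequence and verify its limit, which is considerably more work than the paper's route. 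Likewise, the ``concatenation'' of boundary optimizers for interior $x$ replaces the one-line convexity argument by a construction you would still have to justify. I would replace the entire upper-bound portion of your plan by: (i) the sign-constancy observation on $\Gamma_\ve\cap\Omega_1$, (ii) constants on $\Gamma_0$, and (iii) local convexity of $\bel{b}^n_\ve$ plus the piecewise-affine structure of $b_{2^{-n},\ve}$ to fill in $\Omega_1$ and then $\Omega_0$.
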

\begin{proof}
The proof is similar to that of Theorem~\ref{thB}. Write $\bel{b}$ for $\bel{b}^n_\ve$ and $b$ for $b_{2^{-n},\ve}.$ Take any $x\in\Oe$ and any $\varphi\in\BMO^d_\ve(Q)$ such that $(\av{\varphi}Q,\av{\varphi^2}Q)=x.$ As shown in the proof of Theorem~\ref{thb} (equation~\eqref{ov} with $X=Q$ and $\mu$ the Lebesgue measure),
$$
b(x)\le\av{|\varphi|}Q.
$$
Taking the infimum over all such $\varphi,$ we conclude that $b\le\bel{b}$ in $\Oe.$

To prove the converse, we again let $Q=(0,1)^n$ and use the function~$\varphi_*$ from~\eqref{i3}. For all $a$ such that $|a|\ge 2^{-n/2}\ve,$ let $\varphi_a=\ve\varphi_*+ a.$ Recall that
$\|\varphi_a\|_{\BMO^d(Q)}=\ve$ and that $(\av{\varphi_a}Q,\av{\varphi_a^2}Q)=(a,a^2+\ve^2).$ Furthermore, $\varphi_a$ is almost everywhere positive for $a\ge 2^{-n/2}\ve$ and almost everywhere negative for $a\le -2^{-n/2}\ve,$ thus 
$$
\bel{b}(a,a^2+\ve^2)\le\av{|\varphi_a|}Q=|\av{\varphi_a}Q|=|a|=b(a,a^2+\ve^2).
$$
Hence, $\bel{b}=b$ on $\Gamma_\ve\cap \Omega_1.$ On the other hand, by considering the constant function $\psi_u=u$ corresponding to a point $(u,u^2)\in\Gamma_0$ we conclude that $\bel{b}=b$ on $\Gamma_0.$

Take any $x\in\Omega_1$ and let $\ell_x$ be any line that passes through $x,$ a point $U$ on $\Gamma_0,$ and a point $V$ on $\Gamma_\ve,$ and such that the segment $[U,V]\subset \Omega_1$ (there are infinitely many such lines for each $x$). Write $x=(1-\gamma)U+\gamma V$ for some $\gamma\in[0,1].$ Note that $b$ is linear along $[U,V],$ while $\bel{b}$ is a locally convex function directly from its definition. Therefore,
\eq[arg]{
\bel{b}(x)\le(1-\gamma)\,\bel{b}(U)+\gamma\,\bel{b}(V)=(1-\gamma)\,b(U)+\gamma\,b(V)=b(x). 
}
Thus, $\bel{b}=b$ in $\Omega_1.$ 

Now, take any $x\in\Omega_0$ and write $x$ a convex combination of the three ``corner'' points of $\Omega_0,$ $(0,0),$ $(-2^{-n/2}\ve,(1+2^{-n})\ve^2),$ and $(2^{-n/2}\ve,(1+2^{-n})\ve^2).$ Since at these three points $\bel{b}=b,$ $b$ is linear in $\Omega_0,$ and $\bel{b}$ is locally convex, arguing as in~\eqref{arg} proves that 
$\bel{b}\le b$ in $\Omega_0.$ Putting everything together, we conclude that $\bel{b}=b$ in $\Oe.$ 
\end{proof}

\begin{proof}[Proof of Theorem~\ref{t6}]
Arguing as in the proof of Theorem~\ref{t1}, we obtain~\eqref{i6} and~\eqref{i7} from Theorem~\ref{thb}. To prove sharpness of~\eqref{i6}, we do not present an explicit function as we did in Theorem~\ref{t1}. Instead, take any $\ve>0$ and any $Q\in\mathcal{D}.$ Theorem~\ref{belb} says that $\bel{b}_\ve^n(0,\ve^2)=
\frac{2^{n/2}}{2^n+1}\,\ve.$ Therefore, there exists a sequence of functions $\{\varphi_k\}$ from $\BMO^d(Q)$ such that $\av{\varphi_k}Q=0,$ $\|\varphi_k\|_{\BMO^d(Q)}=\av{\varphi^2_k}Q^{1/2}=\ve,$ and
$$
\lim_{k\to\infty}\av{|\varphi_k-\av{\varphi_k}Q|}Q=\lim_{k\to\infty}\av{|\varphi_k|}Q=\frac{2^{n/2}}{2^n+1}\,\ve.
$$
Extending each $\varphi_k$ periodically to all of $\rn$ we get $\|\varphi_k\|_{\BMO^d(\rn)}=\ve,$ which completes the proof.
\end{proof}

\section{$\alpha$-martingales and  continuous BMO}

\label{martingales}
Any metric measure space has a canonical BMO defined on balls; we call this the continuous BMO. If the space is equipped with an $\alpha$-tree $\mathcal{T},$ one also has $\BMO(\mathcal{T}).$  It is natural to ask whether our theory for BMO on trees can be used to obtain sharp inequalities for the continuous BMO. If the tree in question is fixed from the outset, the answer is no; however, if one allows each BMO function to generate its own tree and, thus, a martingale, then the answer is yes, provided all such martingales are regular in a particular uniform sense.

To focus the discussion, consider the classical BMO on Euclidean cubes (the continuous $\BMO(\mathbb{R}^n)$ in the $L^\infty$-metric). Take a cube $Q$ in $\mathbb{R}^n$ and, as in~\eqref{1}, let
$$
\BMO(Q)=\big\{\varphi\in L^1(Q)\colon\|\varphi\|_{\BMO(Q)}:=\sup_{\text{cube}~J\subset Q}\big(\Delta_{2,J}\big)^{1/2}<\infty\big\}.
$$

We first give a geometric definition of an $\alpha$-concave function that works for the full range $\alpha\in(0,1];$ $\alpha$-convex functions are defined symmetrically. The reader can easily verify that this definition is equivalent to Definition~\ref{def} for $\alpha\in(0,1/2].$ 
\begin{definition}
\label{def1}
Take $\ve>0$ and $\alpha\in\big(0,1].$ Let $P$ and $R$ be two distinct points from $\Oe.$ The segment $[P,R]$ is called $\alpha$-good, if the length of the portion of $[P,R]$ that lies outside $\Oe$ does not exceed $(1-\alpha)\len{P}{R}.$ A function $B$ on $\Oe$ is called $\alpha$-concave, if for each $\alpha$-good segment $[P,R]$ and each $Q\in[P,R]\cap\Oe$ we have
\eq[6016]{
B(Q)\ge \frac{\len{Q}{R}}{\len{P}{R}}\,B(P)+\frac{\len{P}{Q}}{\len{P}{R}}\,B(R).
}
\end{definition}
Observe that the larger the $\alpha,$ the weaker the condition of $\alpha$-concavity, and $1$-concavity is the same as local concavity.

We now introduce special trees and martingales adapted to those trees.

\begin{definition}
\label{def2}
Let $Q$ be a cube in $\mathbb{R}^n.$ A disjoint collection $\mathcal{T}$ of measurable subsets of $Q$ is called a binary tree on $Q$ if 
\ben
\item
For each $J\in\mathcal{T},$ $J=J^-\cup J^+,$ with $J^-,J^+\in\mathcal{T};$
\item
$\mathcal{T}=\cup\mathcal{T}_k,$ where $\mathcal{T}_0=\{Q\}$ and $\mathcal{T}_{k+1}=\cup_{J\in\mathcal{T}_k}\{J^-,J^+\};$
\item
For all $J\in\mathcal{T},$ $|J|>0,$ and $\lim_{k\to\infty}\max_{J\in\mathcal{T}_k}|J|=0.$
\een
\end{definition}

\begin{definition}
\label{def3}
Let $\mathcal{T}$ be a binary tree on a cube $Q,$ $\alpha\in(0,1],$ and $\ve>0.$ Let $\{x_J\}_{J\in\mathcal{T}}$ be a collection of points from $\Oe$ such that for any $J,$ $|J|\,x_J=|J^-|\,x_{J^-}+|J^+|\,x_{J^+}.$ Let $\{f_k\}$ be the corresponding sequence of $\Oe$-valued simple functions on $Q$ given by
$$
f_k=\sum_{J\in\mathcal{T}_k}x^{}_J\,\chi^{}_J.
$$
Assume that for each $J\in\mathcal{T},$ either $x_{J^-}=x_{J^+}$ or the segment $[x_{J^-},x_{J^+}]$ is $\alpha$-good. Then $f_k$ is called an $(\alpha,\ve)$-martingale adapted to the tree $\mathcal{T}.$

Furthermore, if $\varphi\in L^2(Q)$ is such that for all $k$ and all $J\in\mathcal{T}_k,$ $(\av{\varphi}J,\av{\varphi^2}J)=f_k|_J,$ then we say that $\varphi$ generates $f_k.$ Lastly, an $\alpha$-martingale is an $(\alpha,\|\varphi\|_{\BMO(Q)})$-martingale generated by a function $\varphi\in\BMO(Q).$
\end{definition}
Observe that by Lemma~\ref{L0}, as used in the proof of Lemma~\ref{L1}, any function $\varphi\in\BMO^d(Q)$ generates a $2^{-n}$-martingale.

Combining the notions of $\alpha$-concave functions and $\alpha$-martingales gives the following general result, which can be viewed as a special form of Jensen's inequality. The first part is contained in Lemma~\ref{L1} and the second part follows from Fatou's Lemma, as in the proof of Theorem~\ref{ta}.
\begin{lemma}
\label{L66}
Let $Q$ be a cube in $\mathbb{R}^n,$ $\ve>0,$ and $\alpha\in(0,1].$ Let $B$ be an $\alpha$-concave function on $\Oe.$ 

If $f_k$ is an $(\alpha,\ve)$-martingale on $Q,$ then for any $k\ge0$
$$
B(f_0)\ge \frac1{|Q|}\int_QB(f_k).
$$
Consequently, if $\varphi\in L^2(Q)$ generates an $(\alpha,\ve)$-martingale and $B$ is continuous on $\Oe,$ then
$$
B(\av{\varphi}Q,\av{\varphi^2}Q)\ge\frac1{|Q|} \int_Q B(\varphi,\varphi^2).
$$
\end{lemma}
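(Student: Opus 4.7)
The plan is to prove the two statements separately, mirroring the structure of the proof of Lemma~\ref{L1}: the first claim is a clean induction on the generation $k$, and the second follows from the first by combining a.e.\ convergence $f_k \to (\varphi,\varphi^2)$ with Fatou's lemma, exactly as in the proof of Theorem~\ref{ta}. The only new ingredient is that the inductive step must use the ``$\alpha$-good segment'' formulation of $\alpha$-concavity (Definition~\ref{def1}), which covers the full range $\alpha\in(0,1]$, in place of the split-ratio form (Definition~\ref{def}) used earlier.

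For the first part I would induct on $k$. The base case $k=0$ is immediate since $f_0\equiv x_Q$ on $Q$. For the inductive step, fix $J\in\T_k$ with children $J^{\pm}$. The martingale identity $|J|\,x_J=|J^-|\,x_{J^-}+|J^+|\,x_{J^+}$ places $x_J$ on the segment $[x_{J^-},x_{J^+}]$, with $\len{x_{J^-}}{x_J}/\len{x_{J^-}}{x_{J^+}}=|J^+|/|J|$ and $\len{x_J}{x_{J^+}}/\len{x_{J^-}}{x_{J^+}}=|J^-|/|J|$. Since $x_J\in\Oe$ and the segment is either degenerate or $\alpha$-good by hypothesis on the martingale, Definition~\ref{def1} applied with $Q=x_J$, $P=x_{J^-}$, $R=x_{J^+}$ yields
$$B(x_J)\ge \frac{|J^-|}{|J|}\,B(x_{J^-})+\frac{|J^+|}{|J|}\,B(x_{J^+}).$$
Multiplying by $|J|$ and summing over $J\in\T_k$ gives
$$\int_Q B(f_k)=\sum_{J\in\T_k}|J|\,B(x_J)\ge\sum_{I\in\T_{k+1}}|I|\,B(x_I)=\int_Q B(f_{k+1}),$$
so the sequence $\int_Q B(f_k)$ is non-increasing in $k$ and starts at $|Q|\,B(f_0)$, completing the induction.

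For the second part, the hypothesis that $\varphi$ generates $f_k$ means $f_k|_J=(\av{\varphi}J,\av{\varphi^2}J)$ for each $J\in\T_k$. The condition $\max_{J\in\T_k}|J|\to 0$, together with the finiteness of each $\T_k$, implies that the $\sigma$-algebras generated by $\T_k$ increase to one with respect to which $\varphi$ and $\varphi^2$ are measurable, so by Doob's martingale convergence theorem (equivalently, Lebesgue differentiation along the tree) $f_k\to(\varphi,\varphi^2)$ almost everywhere on $Q$. Continuity of $B$ on $\Oe$ then gives $B(f_k)\to B(\varphi,\varphi^2)$ a.e., and Fatou's lemma---applied after a harmless additive shift to secure an integrable minorant, which is automatic in the applications where $B\ge 0$---yields
$$\int_Q B(\varphi,\varphi^2)\le \liminf_{k\to\infty}\int_Q B(f_k)\le |Q|\,B(f_0).$$

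The only delicate point is the $\alpha$-goodness verification in the inductive step, but this is exactly what is built into the definition of an $(\alpha,\ve)$-martingale, so the substantive work lies in producing such martingales for concrete $\varphi\in\BMO(Q)$, not in establishing the lemma itself.
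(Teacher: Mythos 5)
Your proof is correct and follows essentially the same route the paper takes, which it compresses to two sentences ("The first part is contained in Lemma~\ref{L1} and the second part follows from Fatou's Lemma, as in the proof of Theorem~\ref{ta}"); you have simply unpacked that. The one thing worth noting is that Lemma~\ref{L1} as stated applies to $\T$-simple functions on $\alpha$-trees with $\alpha\in(0,1/2]$ and the split-ratio form of $\alpha$-concavity, whereas here one has a binary tree, the $\alpha$-good-segment form (Definition~\ref{def1}) valid for all $\alpha\in(0,1]$, and a martingale that need not come from a function, so redoing the one-step induction as you did --- reading the barycentric weights off the martingale identity and invoking Definition~\ref{def1} directly --- is the right move rather than citing Lemma~\ref{L1} verbatim. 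Your caveat about an integrable minorant for Fatou is a fair observation; the paper glosses over it in the same way, and it is harmless in the intended applications.
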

Let us illustrate this lemma in the case of the John--Nirenberg inequality. To state our result, we need two new parameters. Take a cube $Q.$ Let
$$
\ve_0(n)=\sup\Big\{\ve>0:~\forall\varphi\in\BMO(Q),~\sup_{\text{cube}~J\subset Q}\av{e^{\ve(\varphi-\av{\varphi}{\scriptscriptstyle J})/
\|\varphi\|_{\BMO(Q)}}}J<\infty\Big\}
$$
and
$$
\alpha_0(n)=\sup\{\alpha\in(0,1]:~\text{every}~ \varphi\in\BMO(Q)  \text{~generates an $\alpha$-martingale on $Q$}\}.
$$
(Of course, neither $\ve_0(n)$ nor $\alpha_0(n)$ actually depends on $Q.$) Observe that both suprema are taken over non-empty sets: since every 
$\varphi\in\BMO(Q)$ is also in $\BMO^d(Q)$ with at most the same norm, we have $\ve_0(n)\ge\ve_0^d(n)=\frac{2^{n/2}}{2^n-1}\,n\,\log 2,$ and $\alpha_0(n)\ge 2^{-n}.$

Recall the function $B_{\alpha,\ve}$ given by~\eqref{ba}. Lemma~\ref{lba} asserts that $B_{\alpha,\ve}$ is $\alpha$-concave on $\Oe$ for $\alpha\in(0,1/2].$ In fact, the same proof also works for $\alpha\in(1/2,1].$  Using this function in Lemma~\ref{L66} gives the following corollary.
\begin{corollary}
\eq[last]{
\ve_0(n)\ge \frac{\sqrt{\alpha_0(n)}}{1-\alpha_0(n)}\log\Big(\frac1{\alpha_0(n)}\Big)
}
\end{corollary}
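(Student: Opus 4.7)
The plan is to run the Bellman induction of Theorem~\ref{ta} in the continuous setting using the $\alpha$-martingale structure furnished by $\alpha_0(n)$. Fix any $\alpha\in(0,\alpha_0(n))$ and $\ve\in(0,\ve_0^\alpha),$ and let $\varphi\in\BMO(Q)$ be an arbitrary function with $\|\varphi\|_{\BMO(Q)}\le\ve$. It suffices to prove the John--Nirenberg bound
$$
\av{e^\varphi}{J}\le K(\alpha,\ve)\,e^{\av{\varphi}{J}}
$$
for every sub-cube $J\subset Q,$ as this produces a finite JN constant for each such $\varphi$ and therefore forces $\ve\le\ve_0(n)$. The desired inequality~\eqref{last} then follows by letting $\alpha\nearrow\alpha_0(n)$ and invoking the continuity of $\alpha\mapsto\frac{\sqrt\alpha}{1-\alpha}\log(1/\alpha)$ on $(0,1].$

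Fix a sub-cube $J\subset Q$. Applying the definition of $\alpha_0(n)$ with $J$ in place of $Q,$ the restriction $\varphi|_J\in\BMO(J)$ generates an $(\alpha,\ve_J)$-martingale on $J,$ where $\ve_J:=\|\varphi|_J\|_{\BMO(J)}\le\ve.$ Since $\Omega_{\ve_J}\subset\Oe,$ this martingale is also an $(\alpha,\ve)$-martingale on $J$: enlarging the parabolic strip can only shrink the external portion of any segment, so $\alpha$-goodness in the sense of Definition~\ref{def1} is preserved. The text notes that the proof of Lemma~\ref{lba} carries over verbatim to the full range $\alpha\in(0,1],$ so $B_{\alpha,\ve}$ is $\alpha$-concave and $C^\infty$ on $\Oe$. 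Combining the boundary identity $B_{\alpha,\ve}(x_1,x_1^2)=e^{x_1}$ with Lemma~\ref{L66} then yields
$$
\av{e^\varphi}{J}=\frac1{|J|}\int_J B_{\alpha,\ve}(\varphi,\varphi^2)\,dx\le B_{\alpha,\ve}\bigl(\av{\varphi}{J},\av{\varphi^2}{J}\bigr).
$$

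The final step is to dominate the right-hand side, exactly as at the end of the proof of Theorem~\ref{ta}. Monotonicity of $r\mapsto(1-r)e^r$ on $[0,\delta(\alpha,\ve)]$ shows that for each fixed $x_1$ the function $B_{\alpha,\ve}(x_1,\cdot)$ attains its maximum over the vertical slice on $\Gamma_\ve,$ and the defining equation $g(\alpha,\delta(\alpha,\ve),\ve)=0$ identifies this maximum as $K(\alpha,\ve)\,e^{x_1}.$ Combining this estimate with the previous display gives the sub-cube John--Nirenberg bound and completes the argument. The only non-mechanical ingredient in this plan is the extension of $\alpha$-concavity of $B_{\alpha,\ve}$ to $\alpha\in(1/2,1],$ which the text asserts is obtained by repeating the computation of Lemma~\ref{lba}; everything else is a direct transcription of the proof of Theorem~\ref{ta} into the language of $\alpha$-martingales.
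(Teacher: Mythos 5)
Your argument correctly unwinds the paper's one-line justification: it follows exactly the paper's intended route of feeding the $\alpha$-concave function $B_{\alpha,\ve}$ (with $\alpha$-concavity extended to $\alpha\in(1/2,1]$, as the paper asserts) into Lemma~\ref{L66} and then bounding the resulting Bellman value by $K(\alpha,\ve)e^{x_1}$ along $\Gamma_\ve$, just as in the proof of Theorem~\ref{ta}. The details you add---applying the machinery to each sub-cube $J$ via $\|\varphi|_J\|_{\BMO(J)}\le\|\varphi\|_{\BMO(Q)}$, observing that an $(\alpha,\ve_J)$-martingale is automatically an $(\alpha,\ve)$-martingale because $\Omega_{\ve_J}\subset\Oe$, and passing to the limit $\alpha\nearrow\alpha_0(n)$---are precisely what the paper's terse ``Using this function in Lemma~\ref{L66}'' elides, so the proposal is correct and matches the paper's approach.
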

A natural conjecture is that~\eqref{last} holds with equality. It does when $n=1,$ which is the only case where we currently know $\alpha_0(n).$ To explain: the main geometric result that underpins the now-well-developed Bellman theory for BMO in dimension 1 is Lemma~4c of~\cite{sv}. For an interval $I,$ any $\delta\in(0,1),$ and any $\varphi\in\BMO(I),$ that lemma gives an explicit construction of a $(1-\delta)$-martingale generated by $\varphi.$ Therefore, $\alpha_0(1)=1$ and so $\ve_0(1)\ge1;$ a simple logarithmic example shows that $\ve_0(1)=1.$  

In higher dimensions we do not yet know either $\alpha_0(n)$ or $\ve_0(n).$  However, in our view inequality~\eqref{last} expresses the right idea: to find $\ve_0(n),$ first understand the nature of $\alpha$-martingales generated by BMO functions. The trees that correspond to the best such martingales (the ones with the largest $\alpha$) may not be given by the procedure of Lemma~\ref{L1}, where on each step we split off a cube from a larger set. Indeed, consider the function on the unit square $Q=(0,1)^2$ that equals zero on two of the four quarters of $Q$ and $\pm\sqrt2$ on the other two. This function has BMO norm $1.$ If one builds a binary martingale by first splitting off one quarter-square of the four, then one of the remaining three, then splitting the last two, one obtains a $1/2$-martingale. If, however, one first splits $Q$ into two ``halves,'' each containing one quarter on which the function is zero, one obtains a $1$-martingale.

\end{document}